\newcommand{\multiline}[1]{%
  \begin{tabularx}{\dimexpr\linewidth-\ALG@thistlm}[t]{@{}X@{}}
    #1
  \end{tabularx}
}
\newcommand{\patch}{\varpi}
\newcommand{\diff}{\mathtt{dec}}
\newcommand{\TkN}{\T^\kappa_N}
\newcommand{\p}{\vec p}
\newcommand{\unj}{u^{n_j}}
\newcommand{\R}{\mathbb{R}}
\newcommand{\E}{\mathsf{E}}
\newcommand{\G}{\mathsf{G}}
\newcommand{\x}{\vec{x}}
\renewcommand{\S}{\mathcal{S}}
\renewcommand{\d}{\mathsf{d}}
\newcommand{\dx}{\,\mathsf{d}\x}
\renewcommand{\L}[1]{\mathrm{L}^{#1}(\Omega)}
\newcommand{\T}{\mathcal{T}}
\newcommand{\V}{\mathbb{V}}
\newcommand{\Vh}{\widehat{\mathbb{V}}}
\newcommand{\Wh}{\widehat{\mathbb{W}}}
\newcommand{\X}{\mathbb{X}}
\newcommand{\bb}{\vec{A}}
\renewcommand{\H}{\mathbb{H}}
\newcommand{\Hdual}{\H^\star}
\renewcommand{\vec}[1]{\bm{\mathsf{#1}}}
\newcommand{\norm}[1]{\left\|#1\right\|}
\newcommand{\Ltwo}{\mathrm{L}^2(\Omega)}
\newcommand{\Lnorm}[1]{\left\|#1\right\|_{\Ltwo}}
\newcommand{\Lfour}{\mathrm{L}^4(\Omega)}
\newcommand{\Lfnorm}[1]{\left\|#1\right\|_{\Lfour}}
\newcommand{\dprod}[1]{\langle #1\rangle}
\newcommand{\tol}{\mathtt{tol}}
\newcommand{\incNk}{\mathtt{inc}_N^n}
\newcommand{\dENk}{\diff^n_N}
\newcommand{\refmesh}{\T^{\kappa,{\tt ref}}_{N}}
\DeclareMathOperator*{\Div}{div}
\DeclareMathOperator{\Span}{span}
\DeclareMathOperator*{\essinf}{ess\,inf}
\newcommand{\myState}[1]{\State\parbox[t]{\dimexpr\linewidth-\algorithmicindent}{#1\strut}}
\newtheorem{theorem}{Theorem}[section]
\newtheorem{proposition}[theorem]{Proposition} 
\newtheorem{corollary}[theorem]{Corollary}
\theoremstyle{definition}
\newtheorem{remark}[theorem]{Remark}
\title[Gradient Flow adaptive FEM for the GPE with rotation]{Gradient Flow Finite Element Discretisations with Energy-Based $hp$--Adaptivity for the Gross-Pitaevskii Equation with Angular Momentum}
\author[P.~Heid]{Pascal Heid$^{1}$}
\address{1) Kantonsschule Zürcher Unterland, 8180 Bülach, Switzerland}
\author[P.~Houston]{Paul Houston$^2$}
\address{2) School of Mathematical Sciences, University of Nottingham,  University 
Park, Nottingham, NG7 2RD, UK}
\author[B.~Stamm]{Benjamin Stamm$^{3}$}
\address{3) Institute of Applied Analysis and Numerical Simulation, University of Stuttgart, D--70569 Stuttgart, Germany}
\author[T.~P.~Wihler]{Thomas P.~Wihler$^4$}
\address{4) Mathematics Institute, University of Bern, CH--3012 Bern, Switzerland}
\begin{document}

\begin{abstract}
This article deals with the \emph{stationary Gross-Pitaevskii non-linear eigenvalue problem in the presence of a rotating magnetic field} that is used to model macroscopic quantum effects such as Bose-Einstein condensates (BECs). 
In this regime, the ground-state wave-function can exhibit an a priori \emph{unknown number of quantum vortices} at \emph{unknown locations}, which necessitates the exploitation of adaptive numerical strategies. 
To this end, we consider the conforming finite element method in combination with a discrete Sobolev gradient descent, which is guided by the energy-topology of the problem, to address the nonlinearity. In addition, a key novelty of this work is an $hp$-adaptive strategy that is solely based on energy decay rather than a posteriori error estimators for the refinement process. Numerical results demonstrate that the $hp$-adaptive strategy is highly efficient in terms of accuracy to compute the ground-state wave function and energy for several test problems where we observe exponential convergence.
\end{abstract}

\keywords{Variational PDE, linear and non-linear eigenvalue problems, semi-linear elliptic operators, Gross-Pitaevskii equation, angular momentum, energy minimisation, gradient flows, iterative Galerkin procedures, adaptive finite element methods, $hp$--adaptivity.}

\subjclass[2010]{35P30, 47J25, 49M25, 49R05, 65N25, 65N30, 65N50}

\maketitle

\section{Introduction}
In quantum physics, the {Gross--Pitaevskii equation (GPE)} \cite{einstein1924quantentheorie,bose1924plancks,dalfovo1999theory} is widely used to study macroscopic quantum effects such as Bose--Einstein condensates (BECs), superconductivity or superfluidity. 
In its physical interpretation of the state of a collection of bosonic particles at zero Kelvin, or very small temperature, it can be rigorously derived from the many-body Schr\"odinger equation as the wave function collapses to a symmetric tensor product of a single-particle function. Such a Hartree--Fock ansatz becomes exact in the (dilute) mean-field limit, see \cite{lieb2001bosons,erdHos2010derivation,lewin2015mean} for some rigorous results.

The GPE exists both in its time-dependent and steady-state form. 
Within this work, we focus on the latter one with the presence of an external stirring magnetic field with magnetic vector potential~$\bb$ that can trigger the formation of quantum vortices with a quantized vorticity. 
Since the number of such vortices, as well as their location, is \emph{unknown} a priori, the use of adaptive finite element strategies is highly desirable. 
In this paper, we present a novel $hp$--adaptive finite element gradient flow method to minimise the Gross--Pitaevskii energy functional \emph{with the inclusion of a rotating magnetic field}, given by

\begin{align} \label{eq:GPenergy}
 \E(u):=\int_\Omega \left(\frac{1}{2} | \nabla u|^2+V(\x)|u|^2+\frac{\beta}{2} |u|^4- i \omega \overline{u} \left(\bb(\x) \cdot \nabla  u\right)\right) \dx,
\end{align}
on the $\Ltwo$-unit sphere
\begin{equation}\label{eq:SH}
\S_{\H}:=\{v \in \H: \Lnorm{v}=1\}.
\end{equation}
Here, $\H:=\mathrm{H}^1_0(\Omega;\mathbb{C})$ denotes the Sobolev space of all complex-valued $\mathrm{H}^1$-functions on a given bounded, connected, and open Lipschitz domain $\Omega\subset\mathbb{R}^d$, in dimensions $d\in\{2,3\}$, with vanishing trace on the boundary~$\partial \Omega$, equipped with the inner-product 
\begin{align*}
(u,v)_\H:=\int_\Omega \nabla u \cdot \nabla \overline{v} \dx
\end{align*}
and induced norm 
\begin{align} \label{eq:Hnorm}
\norm{u}_{\H}^2:=(u,u)_{\H}=\int_\Omega|\nabla u|^2\dx.
\end{align} 

In the energy functional~\eqref{eq:GPenergy}, the (given) function $V \in \mathrm{L}^\infty(\Omega;\mathbb{R})$ represents an external potential, and $\beta \in \mathbb{R}_{\geq 0}$ and $\omega \in \mathbb{R}_{\geq 0}$ are prescribed non-negative constants that control the magnitude of the non-linearity and the rotation speed of the magnetic field, respectively. 
A thorough introduction to the basic theory and mathematical properties of ground-states of the Gross--Pitaevskii equation in a rotating frame can be found in, e.g.,~\cite{bao2005ground,bao2013mathematical,bao2014mathematical}, which includes a discussion regarding the existence and non-existence of ground-states.

We now shortly review the most relevant literature concerning numerical methods for the approximation of solutions to the GPE equation with rotating magnetic field. 
The first key-question to address is whether to consider the problem as an energy minimisation problem or to solve the related non-linear eigenvalue problem that is obtained from looking at critical points of the associated Lagrangian built upon the energy functional~\eqref{eq:GPenergy} and the constraint~\eqref{eq:SH}. Due to the non-linear constraint, the energy minimisation can be viewed as a Riemannian energy minimisation problem. The relation between the Riemannian energy minimisation and the non-linear eigenvector problem is elaborated in~\cite{henning2025gross}. 
The second question that arises is how to discretise the associated problem, i.e., how to represent the unknown function $u$ in a finite-dimensional subspace of $\H$. 

First, early contributions considered the imaginary time method originating from the physics literature, see, e.g.,~\cite{aftalion2001vortices} where the critical angular velocities that lead to vortex formation are studied.
Later, contributions of Bao et al.~\cite{bao2003ground,bao2003numerical,BaoDu:04} introduced a mathematical framework to tackle the problem directly by energy minimisation, recently further analysed in~\cite{10.1093/imanum/draf100}. 
Different metrics to consider the Sobolev gradient of the energy functional have then been introduced in~\cite{danaila2010new}, while \cite{danaila2017computation,antoine2017efficient} considers conjugate gradient type methods. 
In~\cite{chen2023second}, the authors consider a second-order flow involving the second time-derivative of the energy-functional.  
Very recently, Henning and Yadav contributed substantially to the analysis of the gradient descent method~\cite{henning2025convergence}, the finite-element a priori convergence analysis~\cite{henning2025discrete} and the numerical study of conjugate-type gradient descents~\cite{ai2025riemannian}.
Finally, we note that the problem can also be tackled by Newton-type methods, see, e.g., \cite{Wu:2017,altmann2021j} and 
\cite{xu2021multigrid} limited to the non-rotating case.

Second, the ground-state solution $u$ can be discretised in different ways covered by standard techniques such as the finite difference~\cite{BaoDu:04,danaila2010new} or the finite element method~\cite{aftalion2001vortices,bao2003ground,bao2003numerical,danaila2010new,ai2025riemannian,henning2025convergence,hauck2024positivity}, as well as (pseudo-) spectral methods~\cite{BaoDu:04,antoine2017efficient,danaila2017computation,chen2023second}.
In the spirit of our proposed $hp$-strategy, which aims to design highly accurate approximation spaces with few number of degrees of freedom, are the recent approaches~\cite{peterseim2024super,henning2023optimal,döding2025} based on the Localized Orthogonal Decomposition (LOD) for the case with and without rotating magnetic field, respectively and the related Ginzburg-Landau equations.

In this paper, we propose a new $hp$-adaptive finite-element strategy to solve the non-linear eigenvector problem corresponding to the critical points of the energy functional~\eqref{eq:GPenergy}.
Our approach relies on a combination of the metric-adaptive gradient flow~\cite{henning2020sobolev,henning2025convergence} and an energy-specific refinement strategy inspired from \cite{HeidStammWihler:21, HoustonWihler:16} (see also the related local error reduction approach~\cite{bammer2025hp} for $hp$--adaptive discretisations of general linear elliptic problems). The energy-based refinement strategy is considerably simpler compared to a residual-based a posteriori criteria. 
Further, it is important to note that within an adaptive strategy, the gradient descent can be combined with the adaptive procedure merging both iterative processes into a single unified one. 
With respect to~\cite{HeidStammWihler:21}, we introduce three key novel developments. First, we extend the GPE to the setting of a rotating magnetic field. Second, we establish a convergence result for a discrete gradient-flow iteration (Theorem~\ref{thm:discreteconvergence}) that is derived from an energy-related continuous projected Sobolev gradient flow; these ideas follow closely the work~\cite{henning2020sobolev}, where the rotation-free case has been considered. Third, we propose a new $hp$-adaptive strategy that yields substantially faster (exponential) convergence with respect to the number of degrees of freedom.

Mesh-adaptive strategies are particularly suited in this case, since the number and location of the vortices is unknown a priori. 
Furthermore, high-order approximations enable the computation of highly accurate ground-states and thus high-fidelity in the computed energies. 

The outline of this paper is as follows. In \textsection\ref{sec:gradient_flow}, we begin by specifying the problem setting and recalling the framework of the metric-adaptive Sobolev gradient approach; we also prove some instrumental results for the discrete gradient flow, cf.~Theorem~\ref{thm:discreteconvergence}. In \textsection\ref{sec:hpfem} we present the $hp$-adaptive strategy, while in \textsection\ref{sec:numerics} we conduct some numerical experiments illustrating the superior performance of the proposed energy-based $hp$-adaptive approach. Finally, in \textsection\ref{sec:conclusions} we summarise the work presented in this article and highlight potential future areas of research.

\section{Projected Sobolev gradient flow} \label{sec:gradient_flow}
In this section, we introduce the GPE for the associated minimisation problem given in~\eqref{eq:GPproblem} below, and establish the continuous and discrete projected Sobolev gradient flow for its exact and numerical solution, respectively. 

\subsection{Gross--Pitaevskii energy minimisation with external magnetic field.}
Throughout the paper, we make the following structural assumption on the external potential
$V \in \mathrm{L}^\infty(\Omega;\mathbb{R})$:
\begin{enumerate}[(V1)]
 \item We assume that 
 \begin{align} \label{eq:V1}
\delta_\Omega:=\essinf_{\x\in\Omega}\left( V(\x) - \frac12 \omega^2 |\x|^2\right)>0,
 \end{align}
 where $\x=(x,y) \in \mathbb{R}^2$ and $\x=(x,y,z) \in \mathbb{R}^3$, respectively, denote the Euclidean coordinates, and $|\cdot|$ the Euclidean norm in $\mathbb{R}^d$, for space dimensions~$d=2,3$.
\end{enumerate}
Since $\Omega$ is a bounded domain, we note that we can always shift the potential $V$ by a constant value so that (V1) is satisfied; indeed, this simply results in a corresponding shift of the energy. Finally, the term $ i\, \bb(\x) \cdot \nabla$ occurring in ~\eqref{eq:GPenergy}, with the real-valued vector function
\begin{equation}\label{eq:A}
\bb(\x):=
\begin{cases}
(y,-x)^\intercal&\text{for $\x=(x,y) \in \Omega\subset\mathbb{R}^2$}, \\
(y,-x,0)^\intercal&\text{for $\x=(x,y,z) \in \Omega\subset\mathbb{R}^3$},
\end{cases}
\end{equation}
takes the role of the angular momentum operator; without loss of generality, for $d=3$, we assume that the rotation takes place about the $z$-axis. Since $\bb$ is divergence-free, we note that
\[
\Div\left(\bb|u|^2\right)
=\bb\cdot\nabla \left(|u|^2\right)
=\bb\cdot\nabla(\overline{u}u)
=\overline{u}(\bb\cdot\nabla u)+u(\bb\cdot\nabla\overline{u});
\]
hence,
\[
\overline{u}(\bb\cdot\nabla u)=\Div\left(\bb|u|^2\right)-u(\bb\cdot\nabla\overline{u}).
\]
From this equality, upon application of the divergence theorem and recalling that $u=0$ on $\partial\Omega$, we infer the identity
\[
\int_\Omega \omega \overline{u} (\bb \cdot \nabla u) \dx=-\int_\Omega \omega u (\bb \cdot \nabla \overline{u}) \dx.
\]
Thereby, a straightforward calculation reveals that the Gross--Pitaevskii energy functional~\eqref{eq:GPenergy} can be stated equivalently as
\begin{align*}
\E(u)=\int_\Omega \left( \frac{1}{2}\left|-i \nabla u+\omega \bb u\right|^2+ \left(V(\x)-\frac{1}{2}\omega^2|\x|^2\right)|u|^2+\frac{\beta}{2}|u|^4 \right) \dx.
\end{align*}
In particular, in view of~\eqref{eq:V1}, this implies that the energy functional from~\eqref{eq:GPenergy} only takes non-negative real values, meaning that
\begin{align} \label{eq:realvalued}
\{\E(v):\,v\in\H\}\subset\mathbb{R}_{\ge 0}.
\end{align}

The focus of this paper is the numerical approximation of a global minimiser of the energy functional $\E$ on the sphere $\S_\H$, which we refer to as a \emph{ground-state} of the Gross-Pitaevskii energy~\eqref{eq:GPenergy}. The following result generalises the existence theorems of \cite{bao2005ground,bao2013mathematical}. 

\begin{proposition}
Under the assumption {\rm (V1)} above there exists a (non-unique) $u_\mathrm{GS}\in\S_\H$, which satisfies
\begin{align} \label{eq:GPproblem}
 \E(u_\mathrm{GS})= \min_{v \in \S_\H} \E(v)\ge 0;
\end{align}
i.e., in particular, the Gross-Potaevskii energy functional $\E$ from \eqref{eq:GPenergy} restricted to $\S_\H$ attains its minimum.
\end{proposition}

\begin{proof}
From~\eqref{eq:realvalued} we notice that the energy functional $\E$ is bounded from below. In addition, based on the condition~(V1), it is coercive (to be shown later on, see~\eqref{eq:Ecoercive} below). Therefore, there exists a bounded sequence $\{u_n\}_n \subset \S_{\H}$ with
\begin{align*}
\lim_{n \to \infty} \E(u_n)=\inf_{v \in \S_\H} \E(v).
\end{align*} 
By the reflexivity of $\H$ and exploiting the compact embedding $\H\hookrightarrow\L{2}$ we can extract a subsequence $\{u_{n_k}\}_k$ that converges weakly in $\H$ and strongly in $\L{2}$ to some element $u \in \H$. Due to the convergence in $\L{2}$, we indeed have that $\norm{u}_{\L{2}}=1$; i.e., $u \in \S_{\H}$. We further remark that the energy functional $\E$ is convex, which follows from the same arguments as in the proof of the coercivity from Proposition~\ref{prop:innerproduct} below, and strongly continuous. In turn, the functional $\E$ is weakly sequentially lower semicontinuous; see, e.g.,~\cite[Prop.~25.20]{Zeidler:90}. Consequently, we have that 
\[\E(u) \leq \liminf_{k \to \infty} \E(u_{n_k}) = \inf_{v \in \S_\H} \E(v),\]
thereby implying that $u$ is indeed a minimiser of the energy functional $\E$ on $\S_{\H}$.
\end{proof}

\subsection{The Gross--Pitaevskii equation}
In order to find a minimum of $\E$ under the $\L2$-normalisation constraint 
\begin{equation}\label{eq:L2constraint}
\Lnorm{u}^2:=(u,u)_{\L2}:=\int_\Omega u\overline u\dx=1,
\end{equation}
cf.~\eqref{eq:GPproblem}, we consider the Lagrange functional
\[
\mathsf{L}(u,\lambda):=\E(u)-\frac{\lambda}{2} \left(\int_\Omega |u|^2 \ \dx - 1\right).
\]
Then, considering $\H=\mathrm{H}^1_0(\Omega;\mathbb{C})$ as a \emph{real} vector space, any (local) minimiser $u$ of $\E$ restricted to $\S_\H$ satisfies the Euler--Lagrange equation
\begin{align}\label{eq:ELabstract}
\dprod{\E'(u),v}=\lambda \Re\left(\int_\Omega u \overline{v} \dx\right) \qquad \text{for all} \ v \in \H,
\end{align} 
where $\lambda \in\mathbb{R}$ is the Lagrange multiplier and $\E'$ denotes the \emph{real} G\^{a}teaux derivative of $\E$. More precisely, $\E':\H \to \Hdual$ is characterised by
\[
\dprod{\E'(u),v}=\left.\frac{{\d}}{\mathrm{dt}} \E(u+tv)\right|_{\R\ni t\to0}, \qquad u,v \in \H,
\]
with $\Hdual$ signifying the dual space of~$\H$ (composed of all bounded linear forms on $\H$ that map into~$\R$), and $\dprod{\cdot,\cdot}:\,\Hdual\times\H\to\R$ the associated dual product. 
A simple calculation employing the Riesz representation theorem reveals that
\begin{align}\label{eq:Egateaux}
\dprod{\E'(u),{v}}
= \Re \left(\int_\Omega\left( \nabla u \cdot \nabla \overline{v} + 2V(\x) u \overline{v} +2\beta |u|^2 u\overline{v}-2i\omega \overline{v} (\bb(\x) \cdot \nabla u) \right) \dx\right),
\end{align}
and thus, the Euler--Lagrange problem~\eqref{eq:ELabstract} consists in finding a pair $(\lambda,u) \in \mathbb{R} \times \H$ such that
\begin{align}\label{eq:RealEL}
\Re \left( \int_\Omega\left(\nabla u \cdot \nabla \overline{v} + 2 V(\x) u \overline{v} +2 \beta |u|^2 u\overline{v}-2i\omega \overline{v} (\bb(\x) \cdot \nabla u) \right) \dx \right) = \lambda \Re\left(\int_\Omega u \overline{v} \dx \right)
\end{align}
for all $v \in \H$. Equivalently, see~\cite[Prop.~A.1]{henning2025discrete}, a solution $(\lambda,u)$ of~\eqref{eq:RealEL} satisfies
\begin{align} 
\label{eq:GPEweak}
 \int_\Omega\left(\nabla u \cdot \nabla \overline{v} + 2 V(\x) u \overline{v} +2 \beta |u|^2 u\overline{v}-2 i\omega \overline{v} (\bb(\x) \cdot \nabla u) \right) \dx
 =
 \lambda (u,v)_{\L2} \qquad \forall v \in \H,
\end{align}
meaning that it does not matter whether or not the real part is applied.  This is the weak form of the stationary \emph{GPE with angular momentum}, which can also be written as a semi-linear eigenfunction problem, namely, in strong form $(\lambda,u) \in \mathbb{R} \times \mathrm{C}_0^2(\Omega)$ satisfies
\begin{align} \label{eq:GPE}
 - \Delta u + 2V(\x) u + 2\beta |u|^2 u - 2i \omega (\bb(\x) \cdot \nabla u)= \lambda u. 
\end{align} 
An $\L{2}$-normalised solution $u \in \H$ of~\eqref{eq:GPE} with \emph{minimal energy} is termed a \emph{ground-state}, denoted by $u_\mathrm{GS}\equiv u$, cf.~\eqref{eq:GPproblem}, whereas any $\L{2}$-normalised $u\in\H$ solving~\eqref{eq:GPE}, with an energy strictly higher than the ground-state energy, i.e.,
$
\E(u) > \E(u_\mathrm{GS})= \min_{v \in \S_\H}\E(v),
$
is called an \emph{excited state}.  

\begin{remark}
The idea of using the real vector space $\H$, and, in turn, the real G\^{a}teaux derivative of $\E:\H \to \mathbb{R}$ for the formulation of the Euler--Lagrange equation~\eqref{eq:ELabstract}, is standard practice in physics. Indeed, this is due to the fact that the mapping $t \mapsto \E(u+tv)$, for given $u,v \in \H$, takes real values only, even if $t$ were complex, cf.~\eqref{eq:realvalued}. Consequently, by the open mapping theorem, the energy functional $\E$ is not complex differentiable. Nonetheless, using the Wirtinger calculus, the weak formulation~\eqref{eq:GPEweak}
 can be derived in an alternative way as the Euler--Lagrange equation of the constraint minimisation problem~\eqref{eq:GPproblem}. 
 \end{remark}

\subsection{Continuous projected Sobolev gradient flow}

In this section, we aim to establish a projected Sobolev gradient flow formulation for the minimisation of the Gross--Pitaevskii energy functional $\E$ from~\eqref{eq:GPenergy}. For this purpose, we will consider the real framework of the GPE~\eqref{eq:GPEweak} rather than the complex Hilbert space approach. We emphasise, however, that both avenues ultimately result in the identical gradient flow. 

\subsubsection{Weighted energy inner-product}

In the spirit of~\cite{henning2020sobolev,henning2025convergence,henning2025discrete}, for given $z \in \H$, we first define the sesquilinear form
\begin{align} \label{eq:innerproduct}
 a_z(u,v):=\int_\Omega \left( \nabla u \cdot \nabla \overline{v} + 2 V(\x) u \overline{v} + 2 \beta |z|^2 u \overline{v} - 2 i \omega \overline{v} \left(\bb(\x) \cdot \nabla u \right) \right)\dx, \qquad u,v \in \H,
\end{align}
cf.~\cite[Eq.~(7)]{henning2025convergence}, which will be investigated in the ensuing result. 

\begin{proposition}\label{prop:innerproduct}
For any given $z \in \H$, the map $a_z: \H \times \H \to \mathbb{C}$ from \eqref{eq:innerproduct} defines a (complex) inner-product on the space $\H=\mathrm{H}_0^1(\Omega;\mathbb{C})$. Moreover, the induced norm is equivalent to the $\H$-norm defined in~\eqref{eq:Hnorm}; especially, for a constant $c>0$ independent of $z \in \H$, the uniform coercivity property $a_z(u,u) \geq c \norm{\nabla u}_{\L2}^2$ for all $u \in \H$ holds true.  
\end{proposition}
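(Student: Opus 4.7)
The plan is to verify four things in turn: sesquilinearity, conjugate symmetry (which in passing shows $a_z(u,u)\in\R$), the uniform (in~$z$) coercivity $a_z(u,u)\ge c\Lnorm{\nabla u}^2$, and continuity $|a_z(u,v)|\le C(z)\norm{u}_\H\norm{v}_\H$. Positive-definiteness of $a_z$ then comes for free from the coercivity bound (since $\Lnorm{\nabla u}=0$ iff $u=0$ in $\H=\mathrm{H}^1_0(\Omega;\mathbb{C})$), and the pairing of coercivity with continuity supplies the asserted equivalence of norms.

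Sesquilinearity (linear in the first slot, conjugate-linear in the second) is immediate from~\eqref{eq:innerproduct}. For conjugate symmetry, the kinetic, potential and cubic terms trivially satisfy $\overline{a_z(v,u)}=a_z(u,v)$. The only delicate term is the rotation one, handled by exactly the trick used before~\eqref{eq:realvalued}: since $\Div\bb=0$ and $u,v$ vanish on $\partial\Omega$, the divergence theorem applied to $\Div(\bb\,u\overline{v})=\overline{v}(\bb\cdot\nabla u)+u(\bb\cdot\nabla\overline{v})$ yields
\[
\int_\Omega \overline{v}(\bb\cdot\nabla u)\dx = -\int_\Omega u(\bb\cdot\nabla\overline{v})\dx, \qquad u,v\in\H,
\]
from which conjugate symmetry of the $-2i\omega$--term follows. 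Choosing $v=u$ incidentally gives $a_z(u,u)\in\R$.

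For the coercivity, the key step is to replicate inside $a_z$ the magnetic rewriting the authors perform for $\E$: expanding $|-i\nabla u+\omega\bb u|^2$ pointwise and using the above integration-by-parts identity produces
\[
a_z(u,u) = \int_\Omega \bigl|-i\nabla u+\omega\bb u\bigr|^2\dx + \int_\Omega\bigl(2V(\x)-\omega^2|\bb(\x)|^2\bigr)|u|^2\dx + 2\beta\int_\Omega |z|^2|u|^2\dx.
\]
Since $|\bb(\x)|^2\le|\x|^2$ (with equality in $d=2$), assumption~(V1) gives $2V-\omega^2|\bb|^2\ge 2\delta_\Omega>0$, while the cubic term is non-negative; hence
\[
a_z(u,u)\ge \int_\Omega\bigl|-i\nabla u+\omega\bb u\bigr|^2\dx + 2\delta_\Omega\Lnorm{u}^2,
\]
with constants independent of $z$. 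Finally, the pointwise Young inequality $|\nabla u|^2\le 2|-i\nabla u+\omega\bb u|^2 + 2\omega^2|\bb|^2|u|^2$ together with the uniform bound $\omega^2|\bb|^2\le \omega^2\sup_{\x\in\Omega}|\x|^2<\infty$ on the bounded domain $\Omega$ lets me absorb the $|u|^2$ contribution into $2\delta_\Omega\Lnorm{u}^2$, yielding $a_z(u,u)\ge c\Lnorm{\nabla u}^2$ with $c>0$ depending only on $\Omega$, $V$, $\omega$ and $\delta_\Omega$.

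Continuity is routine: Cauchy–Schwarz with $\bb\in L^\infty(\Omega)$ dispatches the kinetic and rotation terms, $V\in L^\infty$ combined with the Poincar\'e inequality handles the potential, and the continuous embedding $\H\hookrightarrow\L{4}$ (valid for $d\in\{2,3\}$) bounds the cubic term through $\int_\Omega |z|^2 u\overline{v}\dx\le \norm{z}_{\L{4}}^2\norm{u}_{\L{4}}\norm{v}_{\L{4}}\le C\norm{z}_\H^2\norm{u}_\H\norm{v}_\H$, so the upper constant necessarily depends on $z$ (which is fine, since the statement only demands uniform coercivity, not uniform continuity). The main obstacle is engineering the magnetic-square rewriting of $a_z(u,u)$; once it is in place, (V1) does essentially all the remaining work for the uniform lower bound.
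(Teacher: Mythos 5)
Your argument is essentially sound and, once one quantitative slip is repaired, lands on exactly the same coercivity constant as the paper. The structural parts --- sesquilinearity, conjugate symmetry of the rotation term via the divergence identity, positive definiteness deduced from coercivity, and norm equivalence from coercivity together with a $z$-dependent upper bound obtained through the embedding $\H\hookrightarrow\mathrm{L}^4(\Omega)$ --- coincide with what the authors do. Where you differ is the route to the lower bound: the paper estimates the cross term directly, $2\left|\omega\overline{u}(\bb(\x)\cdot\nabla u)\right|\le\varepsilon|\nabla u|^2+\varepsilon^{-1}\omega^2 r_\Omega^2|u|^2$ with $r_\Omega:=\sup_{\x\in\Omega}|\bb(\x)|$, and tunes $\varepsilon$ so that (V1) makes the zeroth-order coefficient non-negative; you instead complete the magnetic square inside $a_z(u,u)$ (mirroring the rewriting the authors perform for $\E$ itself) and only then invoke (V1). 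The two computations are algebraically equivalent, and when your final Young step is optimised they produce the identical constant $c=2\delta_\Omega/(\omega^2 r_\Omega^2+2\delta_\Omega)$.

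The slip is in the last absorption step. As written you use $|\nabla u|^2\le 2\left|-i\nabla u+\omega\bb u\right|^2+2\omega^2|\bb|^2|u|^2$ and claim the resulting term $\omega^2 r_\Omega^2\Lnorm{u}^2$ can be absorbed into $2\delta_\Omega\Lnorm{u}^2$; with the symmetric constant $2$ this only works when $\omega^2 r_\Omega^2\le 2\delta_\Omega$, which fails precisely in the fast-rotation regime of interest. The repair is immediate: use the weighted form $|\nabla u|^2\le(1+s)\left|-i\nabla u+\omega\bb u\right|^2+(1+s^{-1})\omega^2|\bb|^2|u|^2$ and choose $s=\omega^2 r_\Omega^2/(2\delta_\Omega)$, which gives $a_z(u,u)\ge(1+s)^{-1}\Lnorm{\nabla u}^2$, i.e.\ the paper's constant, uniformly in $z$. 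With that adjustment your proof is complete.
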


\begin{proof}
We will recall several ideas from \cite[\textsection 2.1]{danaila2010new}. Throughout, let $z \in \H$ be arbitrary. 
\begin{enumerate}[(i)]
\item \emph{Bilinearity and symmetry.} First, we note that the mapping $u \mapsto a_z(u,v)$ is linear for any fixed $v \in \H$. Moreover, we have that $a_z(u,v)=\overline{a_z(v,u)}$ for any $u,v \in \H$. Indeed, since $\beta \in \mathbb{R}$ and $V \in \mathrm{L}^\infty(\Omega;\mathbb{R})$, the conjugate-symmetry is directly observed for the first three terms in the integral from~\eqref{eq:innerproduct}. Moreover, for the last term, we exploit the fact that $u$ and $v$ vanish along the boundary of $\Omega$, and hence the application of the divergence theorem implies that
\[
  - i \omega \int_\Omega \overline{v} (\bb \cdot \nabla u) \dx =  i \omega \int_\Omega u \Div (\overline{v} \bb) \dx = i \omega \int_\Omega u (\nabla \overline{v} \cdot \bb)  \dx = \overline{-i\omega \int_\Omega \overline{u} (\bb\cdot \nabla v) \dx},
  \]
since $\bb$ is real-valued and divergence-free. 

\item \emph{Coercivity and equivalence to the $\H$-norm.} We first note that
\begin{align}
a_z(u,u) \leq \int_\Omega \left( |\nabla u|^2+(2V+2\beta |z|^2)|u|^2+2 \omega r_{\Omega} |\nabla u| |u|\right) \dx,\label{eq:azuu}
\end{align} 
with
\[
r_{\Omega}:=\sup_{\x \in \Omega}|\bb(\x)|=\sup_{\x \in \Omega}\sqrt{x^2+y^2},
\]
cf.~\eqref{eq:A}, with $\x=(x,y)\in\mathbb{R}^2$ or $\x=(x,y,z)\in\mathbb{R}^3$. Using the Cauchy-Schwarz inequality, the Rellich--Kondrachov embedding theorem, and the Poincar\'e-Friedrichs inequality, we observe that
\[
\int_\Omega|z|^2|u|^2\dx
\le\|z\|^2_{\mathrm{L}^4(\Omega)}\|u\|^2_{\mathrm{L}^4(\Omega)}
\le C\|\nabla u\|^2_{\Ltwo}\|\nabla z\|^2_{\Ltwo},
\]
for a constant $C>0$ which only depends on~$\Omega$. The remaining terms on the right-hand side of~\eqref{eq:azuu} can be estimated in an analogous way, so that the upper bound
$
a_z(u,u) \leq C \Lnorm{\nabla u}^2$ is obtained
for all $u \in \H$,
where the constant $C > 0$ depends on $\norm{V}_{\L{\infty}}$, $\|\nabla z\|_{\Ltwo}$, $\beta$, $\omega$, and $\Omega$. In order to derive the lower bound, for any $\varepsilon\in(0,1)$ and $\vec{x}\in\Omega$, employing the arithmetic-geometric mean inequality gives
\begin{align}\label{eq:ibound}
2\left|\omega \overline{u} (\bb(\x) \cdot \nabla u) \right| \leq  \varepsilon |\nabla u|^2+\varepsilon^{-1}\omega^2 r^2_\Omega|u|^2.
\end{align}
Selecting 
\begin{equation}\label{eq:eps}
\varepsilon:=\frac{\omega^2 r_{\Omega}^2}{\omega^2 r_{\Omega}^2+2 \delta_\Omega} \in (0,1),
\end{equation}
with $\delta_\Omega>0$ from~\eqref{eq:V1}, it follows that 
\begin{equation}\label{eq:eps1}
2V(\x)-\varepsilon^{-1}\omega^2r^2_\Omega \geq 0,\qquad\text{for (almost) all }\vec{x}\in\Omega.
\end{equation}
Exploiting that $\beta \geq 0$, we have
\begin{align}\label{eq:c1}
c_1(\x)
:=&(2V(\x)+2\beta|z|^2) -  \varepsilon^{-1}\omega^2 r^2_\Omega
\ge0,\qquad \x \in \Omega.
\end{align}
Therefore, making use of~\eqref{eq:ibound} and~\eqref{eq:c1} leads to
\begin{align} \label{eq:coercivity}
a_z(u,u) \geq \int_\Omega (1-\varepsilon)|\nabla u|^2+c_1(\x) |u|^2 \dx \geq c \norm{\nabla u}^2_{\L2}, 
\end{align}
where the constant is given by
\begin{align*} 
c:=1-\varepsilon=\frac{2 \delta_\Omega}{\omega^2 r_\Omega^2+2 \delta_\Omega}>0;
\end{align*}
cf.~\eqref{eq:V1}.

\item Finally, the \emph{positive definiteness} of $a_z$ follows directly from~\eqref{eq:coercivity}.
\end{enumerate}
This complete the proof.
\end{proof}

Upon taking the real part, we immediately obtain a real inner-product.

\begin{corollary} \label{cor:realinner}
The bilinear form
\begin{align} \label{eq:realip}
(u,v) \mapsto \Re \left(a_z(u,v)\right), \qquad u,v \in \H,
\end{align}
with $a_z(\cdot,\cdot)$ from~\eqref{eq:innerproduct}, defines an inner-product on the \emph{real} Hilbert space $\H = \mathrm{H}_0^1(\Omega;\mathbb{C}).$
\end{corollary}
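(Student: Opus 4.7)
The plan is to read off each of the three defining properties of a real inner product directly from the corresponding facts about $a_z$ established in Proposition~\ref{prop:innerproduct}, so essentially no new work is required.

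First, for $\mathbb{R}$-bilinearity I would note that $a_z(\cdot,v)$ is $\mathbb{C}$-linear and $a_z(u,\cdot)$ is conjugate $\mathbb{C}$-linear by construction; thus both slots are in particular $\mathbb{R}$-linear. Since $\Re(\alpha\zeta)=\alpha\Re(\zeta)$ for every $\alpha\in\mathbb{R}$ and $\zeta\in\mathbb{C}$, taking the real part preserves $\mathbb{R}$-linearity in both arguments, yielding $\mathbb{R}$-bilinearity of $(u,v)\mapsto\Re(a_z(u,v))$ on $\H$ viewed as a real vector space.

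Next, for symmetry I would invoke the conjugate-symmetry $a_z(v,u)=\overline{a_z(u,v)}$ shown in step~(i) of the proof of Proposition~\ref{prop:innerproduct}; taking real parts gives
\[
\Re(a_z(v,u))=\Re\bigl(\overline{a_z(u,v)}\bigr)=\Re(a_z(u,v)).
\]
For positive definiteness, the same conjugate-symmetry applied with $u=v$ gives $a_z(u,u)\in\mathbb{R}$, so $\Re(a_z(u,u))=a_z(u,u)$. The coercivity estimate~\eqref{eq:coercivity} from Proposition~\ref{prop:innerproduct} then yields $\Re(a_z(u,u))\ge c\|\nabla u\|_{\Ltwo}^2\ge 0$, with equality forcing $\nabla u=0$ and hence, since $u\in\mathrm{H}_0^1(\Omega;\mathbb{C})$, $u=0$ by the Poincar\'e--Friedrichs inequality.

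There is no real obstacle here; the only point worth flagging is the conceptual one that conjugate-linearity in the second slot becomes genuine $\mathbb{R}$-linearity after taking the real part, which is why the statement makes sense over the real field even though $a_z$ itself is only sesquilinear.
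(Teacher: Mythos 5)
Your proof is correct and follows exactly the route the paper intends: the paper offers no written proof beyond the remark that taking real parts of $a_z$ ``immediately'' yields a real inner-product, and your argument simply spells out the three properties ($\mathbb{R}$-bilinearity, symmetry, positive definiteness) as direct consequences of Proposition~\ref{prop:innerproduct}. Nothing is missing.
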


\begin{remark} \label{rem:GPEIP}
By definition of the inner-product $a_u(\cdot,\cdot)$, cf.~\eqref{eq:innerproduct}, the Gross--Piteavskii eigenvalue problem can be stated equivalently as follows: find $(\lambda,u) \in \mathbb{R} \times \H$ such that
\begin{align*}
\Re \left(a_u(u,v)\right)=\lambda \Re \left((u,v)_{\L{2}}\right) \qquad \Longleftrightarrow \qquad a_u(u,v)=\lambda(u,v)_{\L{2}}
\end{align*}
for all $v \in \H$.
\end{remark}

\begin{remark}
The arguments in the proof of Proposition~\ref{prop:innerproduct} can be applied to show that the energy functional $\E$ is coercive on $\H$. Indeed, applying the bound~\eqref{eq:ibound}, with $\varepsilon\in(0,1)$ from~\eqref{eq:eps}, we notice that
\begin{align*}
\E(u)
&\ge\int_\Omega \frac{1}{2}\left(\left(1-\varepsilon\right) | \nabla u|^2+\left(V(\x)-\frac12\varepsilon^{-1}\omega^2 r^2_\Omega\right)|u|^2+\frac{\beta}{2} |u|^4\right) \dx\qquad\forall u\in\H.
\end{align*}
Recalling~\eqref{eq:eps1} and the fact that $\beta\ge 0$, it follows that
\begin{equation}\label{eq:Ecoercive}
\E(u)\ge c\norm{\nabla u}^2_{\Ltwo}\qquad\forall u\in\H,
\end{equation}
with the constant $c=\nicefrac{(1-\varepsilon)}{2}>0$.
\end{remark}

\subsubsection{Energy-based orthogonal projection}

The idea of the Sobolev gradient flow to be applied in this work is to follow a trajectory in the direction of the steepest descent with respect to the topology induced by the real inner-product from~\eqref{eq:realip}. More precisely, for fixed $u \in \H$, we define the gradient descent direction $\nabla_{a_u}\E(u)$ of $\E$ at $u \in \H$ by
\begin{align} \label{eq:descentdirection}
\Re \left(a_u(\nabla_{a_u}\E(u),v)\right)=\dprod{\E'(u),v} \qquad \text{for all} \ v \in \H;
\end{align}
we emphasise that such an element $\nabla_{a_u} \E(u) \in \H$ exists and is unique thanks to Corollary~\ref{cor:realinner} and the Riesz representation theorem. In light of~\eqref{eq:Egateaux} and \eqref{eq:innerproduct}, we observe that
\begin{align} \label{eq:IPvsEG}
\Re \left(a_u(u,v)\right)=\dprod{\E'(u),v} \qquad \text{for any} \ u,v \in \H,
\end{align} 
and thus
\begin{align*}
\Re \left(a_u(\nabla_{a_u} \E(u),v) \right)= \Re \left(a_u(u,v)\right).
\end{align*} 
This, in turn, implies the identity 
\begin{equation}\label{eq:idGradE}
\nabla_{a_u} \E(u)=u\qquad\text{on } \H. 
\end{equation}
We note that we could omit taking the real-part on the left-hand side of~\eqref{eq:descentdirection}, if, instead of employing the real G\^{a}teaux derivative $\E'$ of $\E$ at $u$, the Wirtinger derivative were considered. This, however, would render the ensuing analysis unnecessarily more technical.

To incorporate the $\L2$-normalisation constraint \eqref{eq:L2constraint} in the dynamical system to be specified below, we project the gradient $\nabla_{a_u} \E(u)=u$ onto the linear tangent space associated to the sphere $\S_{\H}$ from~\eqref{eq:SH} at~$u$ given by 
\[
\mathbb{T}_{u}:=\{w \in \H: (w,u)_{\L2}=0\} =\left\{w \in \H : \Re\left( (w,u)_{\L2}  \right)= 0\right\}.
\]
Let $\mathsf{P}_{u}: \H \to \mathbb{T}_{u}$ be the orthogonal projection with respect to the inner-product $a_u(\cdot,\cdot)$, or equivalently to $\Re\left(a_u(\cdot,\cdot)\right)$; in particular, for any $v\in\H$, we have
\begin{equation}\label{eq:Pudef}
a_u(\mathsf{P}_{u}(v),w)=a_u(v,w)\qquad \forall w\in\mathbb{T}_u.
\end{equation}
For any $u\neq 0$, we note the explicit formula
\begin{equation}\label{eq:P}
v\mapsto\mathsf{P}_{u}(v):=v-\frac{(v,u)_{{\rm L}^2(\Omega)}}{(\mathsf{G}(u),u)_{\L2}}\mathsf{G}(u),
\end{equation}
where, for any $u \in \H$, the Riesz representer $\mathsf{G}(u) \in \H$ is uniquely defined by
\begin{equation}\label{eq:G}
a_u(\mathsf{G}(u),v)=(u,v)_{\Ltwo} \qquad \forall v \in \H,
\end{equation}
or equivalently by
\begin{equation} \label{eq:Greal}
  \Re\left(a_u(\mathsf{G}(u),v)\right)=\Re\left((u,v)_{\L2}\right) \qquad \forall v \in \H.
\end{equation}

\subsubsection{Projected gradient flow}

We introduce the dynamical system 
\begin{align} \label{eq:dynamicalsystem}
\dot{u}(t)=-\mathsf{P}_{u(t)}(\nabla_{a_{u(t)}} \E(u(t))) =-\mathsf{P}_{u(t)}(u(t))\quad\text{for } t > 0\quad\text{and}\quad u(0)=u^0,
\end{align}
for some initial guess $u^0 \in \S_\H$, where we have employed the identity~\eqref{eq:idGradE} in the second equality. Using the representation~\eqref{eq:P}, the above dynamical system~\eqref{eq:dynamicalsystem} can also be expressed by
\begin{subequations}\label{eq:continuousflow2}
\begin{align}
\dot{u}(t)=-\mathsf{P}_{u(t)}(u(t))&=-u(t)+\gamma_{u(t)}\mathsf{G}(u(t)) ,\quad t>0, \qquad u(0)=u^0,
\end{align}
with
\[
\gamma_{u(t)}=\frac{\|u(t)\|^2_{\L2}}{(\mathsf{G}(u(t)),u(t))_{\L2}},
\]
for some initial $u(0)=u^0 \in \S_\H$. In light of~\eqref{eq:G} and the coercivity of $a_{u(t)}$, cf.~Proposition~\ref{prop:innerproduct}, we observe that
\[
(\G(u(t)),u(t))_{\Ltwo} = \overline{(u(t),\G(u(t)))}_{\Ltwo} = a_{u(t)}(\G(u(t)),\G(u(t))) > 0;
\]
i.e., $\gamma_{u(t)}>0$ for any $t>0$. Furthermore, since 
$\dot{u}(t)=-\mathsf{P}_{u(t)}(u(t)) \in \mathbb{T}_{u(t)}$,
a straightforward calculation yields that
\[
\frac{{\d}}{{\d}t} \Lnorm{u(t)}^2=2 \Re\left((u(t),\dot{u}(t))_{\L2}\right)=0,
\]
i.e. the flow preserves the $\Ltwo$-norm. Consequently, for $u_0\in\S_\H$, we conclude that
\begin{equation}
    \gamma_{u(t)}=\frac{1}{(\mathsf{G}(u(t)),u(t))_{\L2}}>0.
\end{equation}
\end{subequations}
Finally, we remark that the energy $\E$ is monotone decreasing along the trajectory $u(t)$ as~$t$ progresses. Indeed, letting $g(t):=\E(u(t))$, $t \ge0$,
we invoke~\eqref{eq:IPvsEG} to obtain
\[
g'(t)
= 
\dprod{\E'(u(t)),\dot{u}(t)}
=
\Re \left( a_{u(t)}(u(t),\dot{u}(t)) \right).
\]
Then, combining~\eqref{eq:Pudef} and~\eqref{eq:continuousflow2}, we deduce that
\[
g'= \Re(a_u(\mathsf{P}_{u}(u),\dot{u}))= -\Re(a_u(\dot{u},\dot{u}))
=-a_u(\dot{u},\dot{u})
\leq 0,
\]
cf.~Proposition~\ref{prop:innerproduct}; i.e., the energy $\E(u(t))$ is monotonically decreasing in~$t$.

\subsection{Discrete gradient flow}\label{sc:discflow}

For the purpose of computing an approximation of the continuous projected Sobolev gradient flow trajectory from~\eqref{eq:continuousflow2}, we employ the forward Euler time stepping method. Specifically, for a given initial value $u^0 \in \S_\H$, this yields a sequence $\{u^n\}_{n\ge 0}\subset \S_\H$, which, for~$n\ge0$, is defined by
\begin{subequations}\label{eq:GFiteration}
\begin{align} 
u^{n+1}&=\frac{\widehat{u}^{n+1}}{\Lnorm{\widehat{u}^{n+1}}},\label{eq:GFiteration1}
  \intertext{where}
  \widehat{u}^{n+1}&
  =u^n-\tau_n\mathsf{P}_{u^n}(u^n)
  =(1-\tau_n)u^n+\frac{\tau_n}{(\mathsf{G}(u^n),u^n)_{\L2}}\mathsf{G}(u^n).\label{eq:GFiteration2}
\end{align}
\end{subequations}
Here, $\{\tau_n\}_{n\ge 0}$ is a sequence of positive (discrete) time steps that is assumed to be uniformly bounded from above and below with bounds~$\tau_{\max}$ and~$\tau_{\min}$, respectively, such that
\begin{equation}\label{eq:timesteps}
0 < \tau_{\min} \leq \tau_n \leq \tau_{\max} < 2\qquad\forall n\ge 0.
\end{equation}
The scheme~\eqref{eq:GFiteration} is called \emph{discrete gradient flow iteration (GFI)}. We note that this iteration coincides with the one from \cite{henning2025convergence}, and, up to an additional term in the weighted inner-product that incorporates the angular momentum, resembles the GFI proposed in~\cite[\textsection 4]{henning2020sobolev}. In particular, we may borrow the following result from \cite[Thm.~3.4]{henning2025convergence} in a modified form, which can also (largely) be obtained by following along the lines of~\cite[\textsection 4]{henning2020sobolev} (with some minor adaptations).

\begin{theorem} \label{thm:discreteconvergence}
For a sufficiently small lower bound $\tau_{\min}$, there exists a constant $\tau_{\max} \in (\tau_{\min},2)$ such that, for any sequence of time-steps $\{\tau_n\}_n \subset [\tau_{\min},\tau_{\max}]$, the resulting sequence $\{u^n\}_n$ generated by the GFI~\eqref{eq:GFiteration} satisfies the following properties:
\begin{enumerate}[\rm(a)]
\item We have that $\E(u^{n+1}) \leq \E(u^n)$ for all $n \geq 0$. More precisely, there exists a constant $C_\tau>0$ (only depending on $\tau_{\min}$ and $\tau_{\max}$) such that
\begin{align} \label{eq:Econ}
\E(u^{n})-\E(u^{n+1}) \geq C_\tau \Lnorm{u^{n+1}-u^n}^2
\end{align}
for all $n \geq 0$.
\item The limit $\E^\star:=\lim_{n \to \infty} \E(u^n)$
is well-defined. Moreover, there exists a strongly convergent subsequence $\{u^{n_k}\}_k$ in $\H$ with a limit $u^\star \in \S_\H$, and $\E(u^\star)=\E^\star$. 

\item Furthermore, for the limit $u^\star$ from {\rm (b)}, and upon letting 
\begin{align*}
\lambda^\star:=\frac{1}{(\mathsf{G}_{u^\star}(u^\star),u^\star)_{\L2}},
\end{align*}
we have that
\begin{align}\label{eq:u*}
a_{u^\star}(u^\star,v)
=
\lambda^\star(u^\star,v)_{\L2} 
\qquad \forall v \in \H,
\end{align}
i.e. $(\lambda^\star,u^\star)$ is a (weak) solution to the Gross--Pitaevskii EVP~\eqref{eq:GPE}.   
\end{enumerate}
\end{theorem}

\begin{proof}
We will sketch the proof by focussing on the required modifications of the techniques presented in~\cite[\textsection 4]{henning2020sobolev} for the more general case including angular momentum. We proceed in several steps. 

\begin{enumerate}[(i)]
\item A straightforward calculation reveals that the mass, without the scaling~\eqref{eq:GFiteration1}, grows in the iteration step~\eqref{eq:GFiteration2}. To this end, we can follow (exactly) along the lines of\cite[\textsection 4.1]{henning2020sobolev}. In particular, upon exploiting~\eqref{eq:GFiteration2}, i.e.,
\[
\widehat{u}^{n+1}-u^n
  =-\tau_n\mathsf{P}_{u^n}(u^n)
  =-\tau_nu^n+\frac{\tau_n}{(\mathsf{G}(u^n),u^n)_{\L2}}\mathsf{G}(u^n),
\]
we obtain that $(u^n,\widehat{u}^{n+1}-u^n)_{\L2}=0$,
and thus
\begin{align} \label{eq:massgrowth}
\norm{\widehat{u}^{n+1}}^2_{\L2} = \norm{u^n}^2_{\L2}+\norm{\widehat{u}^{n+1}-u^n}_{\L2}^2 \geq \norm{u^n}^2_{\L2}.
\end{align}

\item Next, we will verify the (discrete) energy dissipation. This is done in several steps and will be summarised in the following. Since the energy, in addition to the angular momentum term, is slightly different from~\cite{henning2020sobolev}, we will provide some more details. Firstly, using~\eqref{eq:G} and applying a similar argument as in (i), we observe that
$
a_{u^n}(\G(u^n),\widehat{u}^{n+1}-u^n)=0,
$
and
\begin{equation}\label{eq:aorth}
a_{u^n}(\widehat{u}^{n+1}-u^n,\widehat{u}^{n+1}-u^n)
=-\tau_n a_{u^n}(u^n,\widehat{u}^{n+1}-u^n).
\end{equation}
Equivalently,
\begin{align} \label{eq:E}
a_{u^n}(u^n,u^n)=a_{u^n}(\widehat{u}^{n+1},\widehat{u}^{n+1})+\left(\nicefrac{2}{\tau_n} -1 \right) a_{u^n}(\widehat{u}^{n+1}-u^n,\widehat{u}^{n+1}-u^n),
\end{align}
cf.~the proof of~\cite[Lem.~4.5]{henning2020sobolev}. Moreover, invoking~\eqref{eq:GPenergy} and~\eqref{eq:innerproduct}, we find that
\begin{align*}
a_{u^n}(u^n,u^n) = 2 \E(u^n) + \beta \int_\Omega |u^n|^4 \dx,
\end{align*}
and similarly
\begin{align}\label{eq:Gh}
a_{u^n}(\widehat{u}^{n+1},\widehat{u}^{n+1})=2 \E(\widehat{u}^{n+1})+ 2 \beta \int_\Omega |u^n|^2|\widehat{u}^{n+1}|^2\dx - \beta \int_\Omega |\widehat{u}^{n+1}|^4 \dx.
\end{align}
Combining the identities~\eqref{eq:E} and \eqref{eq:Gh}, we deduce that
\begin{align*}
\E(u^n)&-\E(\widehat{u}^{n+1})\\
&= - \frac{\beta}{2} \int_\Omega \left(|u^n|^2-|\widehat{u}^{n+1}|^2\right)^2 \dx +\left(\frac{1}{\tau_n}-\frac{1}{2}\right)a_{u^n}(\widehat{u}^{n+1}-u^n,\widehat{u}^{n+1}-u^n).
\end{align*}
Furthermore, repeating the calculations in the proof of Theorem~\cite[Lem.~4.5]{henning2020sobolev} (with $\beta$ replaced by $2\beta$), we arrive at
\begin{align*}
\E(u^n)-\E(\widehat{u}^{n+1}) &\geq \left(\frac{1}{\tau_n}-\frac{1}{2} \right) a_0(\widehat{u}^{n+1}-u^n,\widehat{u}^{n+1}-u^n) -\beta \int_\Omega |\widehat{u}^{n+1}-u^n|^4 \dx \\
& \quad + 2\beta \left(\frac{1}{\tau_n}-\frac{5}{2} \right)\int_\Omega|\widehat{u}^{n+1}-u^n|^2|u^n|^2\dx. 
\end{align*}
In turn, for $\tau_n \leq \nicefrac{2}{5}$, we have that
\begin{align} \label{eq:ediffbound}
\E(u^n)-\E(\widehat{u}^{n+1}) \geq \left(\frac{1}{\tau_n}-\frac{1}{2} \right) a_0(\widehat{u}^{n+1}-u^n,\widehat{u}^{n+1}-u^n) -\beta \int_\Omega |\widehat{u}^{n+1}-u^n|^4 \dx. 
\end{align}

\item Based on~\eqref{eq:massgrowth}, \eqref{eq:aorth} and~\eqref{eq:ediffbound}, it can be shown by an inductive argument, cf.~the proof of~\cite[Lem.~4.7]{henning2020sobolev}, that there exists $0<\tau_{\max} \leq \nicefrac{2}{5}$ (depending on $\E(u^0)$ and $\beta$) such that the energy decays. Specifically, similarly to~\cite[Eq.~(26)]{henning2020sobolev}, we derive the bound
\begin{align} \label{eq:diffbound2}
\E(u^n)-\E(\widehat{u}^{n+1}) \geq c\, a_0(\widehat{u}^{n+1}-u^n,\widehat{u}^{n+1}-u^n)\ge 0,
\end{align}
for some $c>0$ independent of $n$ (but depending on $\tau_{\max}$ and $ \, \beta$). Then,  
\begin{align} \label{eq:energydissipation}
\E(u^{n+1}) \leq \E(\widehat{u}^{n+1}) \leq \E(u^n),
\end{align}
where the first inequality follows from the mass growth obtained in~(i). This proves the first part of (a). Concerning the bound~\eqref{eq:Econ}, we simply refer to the proof in~\cite[Thm~3.4]{henning2025convergence}.\smallskip

\item By the energy decay~\eqref{eq:energydissipation} and the non-negativity of $\E$, cf.~\eqref{eq:realvalued}, it immediately follows that the sequence $\{\E(u^n)\}_{n}$ has a limit $\E^\star\ge 0$. In turn, by virtue of the coercivity bound~\eqref{eq:Ecoercive}, the sequence $\{u^n\}_n$ is uniformly bounded in $\H$, and thus has a weakly convergent subsequence $\{u^{n_j}\}_j$ in $\H$ with a limit $u^\star \in\H$; thanks to the Rellich-Kondrachov compactness theorem we may assume that the convergence is strong in $\mathrm{L}^p(\Omega;\mathbb{C})$, for $1 \leq p < 6$ in dimensions $d\in\{2,3\}$; in particular, we have $u^\star\in\S_{\H}$. Moreover, using~\eqref{eq:diffbound2} and \eqref{eq:energydissipation}, and employing Proposition~\ref{prop:innerproduct}, we obtain the bounds
\[
\E(u^n)-\E(u^{n+1})
\ge \E(u^n)-\E(\widehat{u}^{n+1})\ge \widetilde c \norm{\widehat{u}^{n+1}-u^n}^2_{\H},
\]
for a constant $\widetilde c>0$. Hence, from the convergence of the energy, we further deduce that
$\norm{\widehat{u}^{n+1}-u^n}_{\H} \to 0$. Then, by repeating the calculations in~\cite[Thm.~4.9]{henning2020sobolev}, we find that
\begin{align} \label{eq:lambdaconv}
\nicefrac{1}{\lambda^{n_j}}:=(\G(u^{n_j}),u^{n_j})_{\L2} \to (\G(u^\star),u^\star)_{\Ltwo}=:\nicefrac{1}{\lambda^\star} \quad \text{as} \ j \to \infty,
\end{align}
from which~\eqref{eq:u*} can be obtained. In particular, the claim in (c) is verified.

\item Next, we will show that $\E(u^\star)=\E^\star$. Our proof is slightly different from the one in~\cite{henning2020sobolev}. First of all, using the definitions of the energy functional  and of the weighted inner-product from~\eqref{eq:GPenergy} and~\eqref{eq:innerproduct}, respectively, we find that
\begin{align*}
2\left|\E(u^\star)-\E(u^{n_j})\right| &= \left| a_{u^\star}(u^\star,u^\star)-\beta \int_\Omega |u^\star|^4 \dx-a_{u^{n_j}}(\unj,\unj)+\beta \int_\Omega |\unj|^4 \dx\right| \\
& \leq |\lambda^\star-\lambda^{n_j}|+|\lambda^{n_j}-a_{\unj}(\unj,\unj)|+\beta \left|\Lfnorm{u^\star}^4-\Lfnorm{\unj}^4\right|;
\end{align*}
the first term vanishes in the limit $j \to \infty$ thanks to~\eqref{eq:lambdaconv}, while the last term vanishes as $j \to \infty$ due to the strong convergence in $\mathrm{L}^4(\Omega)$.  Concerning the second summand, in light of~\eqref{eq:GFiteration2}, we first observe that
\[
\frac{1}{\tau_{n_j}}a_{\unj}(\widehat{u}^{n_j+1},\unj)-\frac{1-\tau_{n_j}}{\tau_{n_j}}a_{\unj}(\unj,\unj) = \lambda^{n_j},
\]
and in turn
\[
|\lambda^{n_j}-a_{\unj}(\unj,\unj)|=\frac{1}{\tau_{n_j}}\left|a_{\unj}(\widehat{u}^{n_j+1}-\unj,\unj)\right|.
\]
Making use of~\eqref{eq:aorth} yields
\begin{align*}
|\lambda^{n_j}-a_{\unj}(\unj,\unj)| &= \tau^{-2}_{n_j} a_{\unj}(\widehat{u}^{n_j+1}-\unj,\widehat{u}^{n_j+1}-\unj) 
\lesssim \tau_{\min}^{-2} \norm{\widehat{u}^{n_j+1}-\unj}^{2}_{\H};
\end{align*}
here, we employed Proposition~\ref{prop:innerproduct} and the uniform boundedness of the sequence $\{u^{n_j}\}_j$. Furthermore, recalling the strong convergence $\norm{\widehat{u}^{n+1}-u^n}_{\H} \to 0$ as $n \to \infty$ from (iv), we conclude that the second summand vanishes as well. Hence, $\lim_{j\to\infty}\left|\E(u^\star)-\E(u^{n_j})\right|=0$.

\item It remains to verify that the subsequence $\{u^{n_j}\}_j$ converges to $u^\star$ strongly in $\H$. Since the proof of this step in the present work is more involved than in the case without angular momentum, we shall provide full details. From~(iv) and~(v), we recall that $\E(u^{n_j}) \to \E(u^\star)$ and $u^{n_j} \to u^\star$ strongly in $\mathrm{L}^{p}(\Omega)$, for $1 \leq p <6$, and $u^{n_j} \rightharpoonup u^\star$ weakly in $\H$. Thereby, this implies that
\begin{align*} 
\lim_{j \to \infty} \left( \frac{1}{2}\norm{u^{n_j}}^{2}_{\H}-\int_\Omega i \omega \overline{u^{n_j}}(\bb \cdot \nabla u^{n_j}) \dx \right) = \frac{1}{2} \norm{u^\star}^{2}_\H -\int_\Omega i \omega \overline{u^\star} (\bb \cdot \nabla u^\star) \dx.
\end{align*}   
In particular, if we can show that
\begin{align} \label{eq:helpnc}
\lim_{j \to \infty} \int_\Omega \overline{u^{n_j}}(\bb \cdot \nabla u^{n_j}) \dx = \int_\Omega \overline{u^\star} (\bb \cdot \nabla u^\star) \dx,
\end{align}
then $\norm{u^{n_j}}_\H \to \norm{u^\star}_\H$ as $j \to \infty$, which in combination with the weak convergence in $\H$ implies the strong convergence. Hence, it remains to verify~\eqref{eq:helpnc}. A simple manipulation reveals that
\begin{align*}
&\left|\int_\Omega \overline{u^{n_j}}(\bb \cdot \nabla u^{n_j}) \dx - \int_\Omega \overline{u^\star} (\bb \cdot \nabla u^\star) \dx \right| \\
& \leq \int_\Omega \left|u^{n_j}-u^\star\right| \left|\bb \cdot \nabla u^{n_j}\right| \dx + \left|\int_\Omega \left(\overline{u^\star}(\bb \cdot \nabla u^{n_j})-\overline{u^\star}(\bb \cdot \nabla u^\star) \right)\dx \right| \\
&=:I_1+I_2.
\end{align*}
For the first term, we recall that $\{u^{n_j}\}$ is a weakly convergent sequence in $\H$, and thus is uniformly bounded. Therefore, the Cauchy--Schwarz inequality yields that
\begin{align*}
I_1 \leq C \norm{u^{n_j}-u^\star}_{\L2} \to 0 \quad \text{as} \ j \to \infty,
\end{align*}
for some constant $C>0$ independent of $j$.
To deal with the second term, we note that $v \mapsto \int_\Omega \overline{u^\star} (\bb \cdot \nabla v) \dx$ is a bounded linear operator on $\H$, and thus $I_2 \to 0$ as $j \to \infty$ thanks to the weak convergence of the sequence.

\end{enumerate}
This completes the proof.
\end{proof}

\section{$hp$--Adaptive Gradient Flow Finite Element Discretisation} \label{sec:hpfem}

\subsection{$hp$-finite element discretisation}

For the purpose of a computational realisation of the discrete GFI~\eqref{eq:GFiteration}, Galerkin spaces $\X_N$ will be constructed in terms of an $hp$--finite element approach. We focus on a sequence of conforming and shape-regular partitions $\{\T_N\}_{N\in\mathbb{N}}$ of the domain~$\Omega$ into simplicial elements~$\T_N=\{\kappa\}_{\kappa\in\T_N}$ (i.e.,~triangles for $d=2$ and tetrahedra for~$d=3$), although more general elements could also be considered. For $N\ge 0$ and an element $\kappa\in\T_N$, we associate a polynomial degree~$p_\kappa \geq 1$ and collect these quantities
 in the polynomial degree vector~$\p_N=[p_\kappa:\kappa\in\T_{N}]$. With
this notation, for any subset~$\patch \subset\T_N$, we introduce the following $hp$--finite element space:
\begin{equation}\label{eq:locspace}
\V(\patch,\p_N):=\left\{v\in \H:\,
v|_{\kappa}\in\mathbb{P}_{p_\kappa}(\kappa),\kappa\in\patch,\,v|_{\Omega\setminus\patch}=0 \right\}, 
\end{equation}
where, for~$p\ge 1$, we denote by~$\mathbb{P}_p(\kappa)$ the local space of
polynomials of degree at most~$p$ on~$\kappa$. Furthermore, similarly as before, we denote by
\[
\S_{\V(\patch,\p_N)}=\left\{v\in\V(\patch,\p_N):\,\|v\|_{\L2}=1\right\}
\]
the $\L2$-unit sphere in~$\V(\patch,\p_N)$. In the sequel, for simplicity of notation, we write $\X_N:=\V(\mathcal{T}_N,\p_N)$ and $\S_N:=\S_{\V(\mathcal{T}_N,\p_N)}$.
Furthermore, the restriction of the energy functional $\E$ from \eqref{eq:GPenergy} to the Galerkin space $\X_N$ is signified by $\E_N:=\E|_{\X_N}$. Then, due to the compactness of the unit sphere $\S_N$ in discrete spaces, there exists a (possibly non-unique) minimiser $u_{N} \in \X_N$ of $\E_N$, i.e., $\E_N(u_N)=\min_{v \in \S_{N}} \E_N(v)$, with 
\begin{equation}\label{eq:unorm}
(u_{N},1)_{{\rm L}^2(\Omega)} \geq 0
\end{equation} 
that satisfies the discrete GPE
\begin{align} \label{eq:GPEdiscrete}
\begin{split}
\int_\Omega\big( \nabla u_N \cdot \nabla \overline{v} +  2 V(\x) u_N \overline{v} + 2 \beta |u_N|^2 u_N \overline{v}&- 2 i\omega \overline{v} (\bb(\x) \cdot \nabla u_N) \big) \dx\\
&=\lambda_N \int_\Omega u_N \overline{v}  \dx \qquad \forall v \in \X_N,
\end{split}
\end{align}
for some $\lambda_N\in\R$, cf.~\eqref{eq:GPEweak}.

\subsection{Fully discrete $hp$--version GFI}

We now turn to the spatial discretisation of the forward Euler time-stepping gradient flow iteration~\eqref{eq:GFiteration} on the $hp$--finite element subspace $\X_N \subset \H$. To this end, for any $u \in \X_N$, we denote by $\mathsf{G}_N (u) \in \X_N$ the unique solution of 
\begin{equation}
 	a_{u}(\mathsf{G}_N(u),v)=(u,v)_{{\rm L}^2(\Omega)} \qquad \forall v \in \X_N, \label{eq:G_discrete}
\end{equation}
or equivalently,
\[
 	\Re\left(a_{u}(\mathsf{G}_N(u),v)\right)=\Re \left((u,v)_{{\rm L}^2(\Omega)}\right) \qquad \forall v \in \X_N;
\]
cf.~\eqref{eq:G} and~\eqref{eq:Greal}, respectively. For given $u\in\X_N$, note that the computation of $\mathsf{G}_N(u)$ is a standard linear source problem: it can be solved using any appropriate linear solver at the disposal of the user. 
Then, for~$n\ge 0$, the fully discrete GFI is given by
\begin{subequations}\label{eq:discreteGF}
\begin{align} 
 	u_N^{n+1}
 	&=\frac{\widehat{u}_N^{n+1}}{\norm{\widehat{u}_N^{n+1}}_{\L2}}, 
 	\intertext{where}
 	\widehat{u}_N^{n+1}&=  (1-\tau_{N}^n) u_N^n+\frac{\tau_{N}^n}{(\mathsf{G}_N(u_N^n),u_N^n)_{\Ltwo}}\mathsf{G}_N(u_N^n),
\end{align}
\end{subequations}
with a sequence of discrete time steps~$\{\tau_N^n\}_{n \geq 0}$ as in \eqref{eq:timesteps}. 

\subsection{Competitive $hp$-refinements and local energy decays}

In this section we pave the way for an $hp$--adaptive algorithm based on employing
a competitive refinement strategy on each element $\kappa$ in the computational
mesh $\T_N$, $N\ge 0$. The essential idea is to compute the maximal predicted
energy reduction on each element $\kappa\in\T_N$, subject to either a local $h$-- or $p$--refinement.
Elements with the largest maximal predicted decrease in the local energy
functional are then appropriately refined. To this end, we proceed with the following steps.

\begin{figure}
\begin{center}
\includegraphics[scale=0.2]{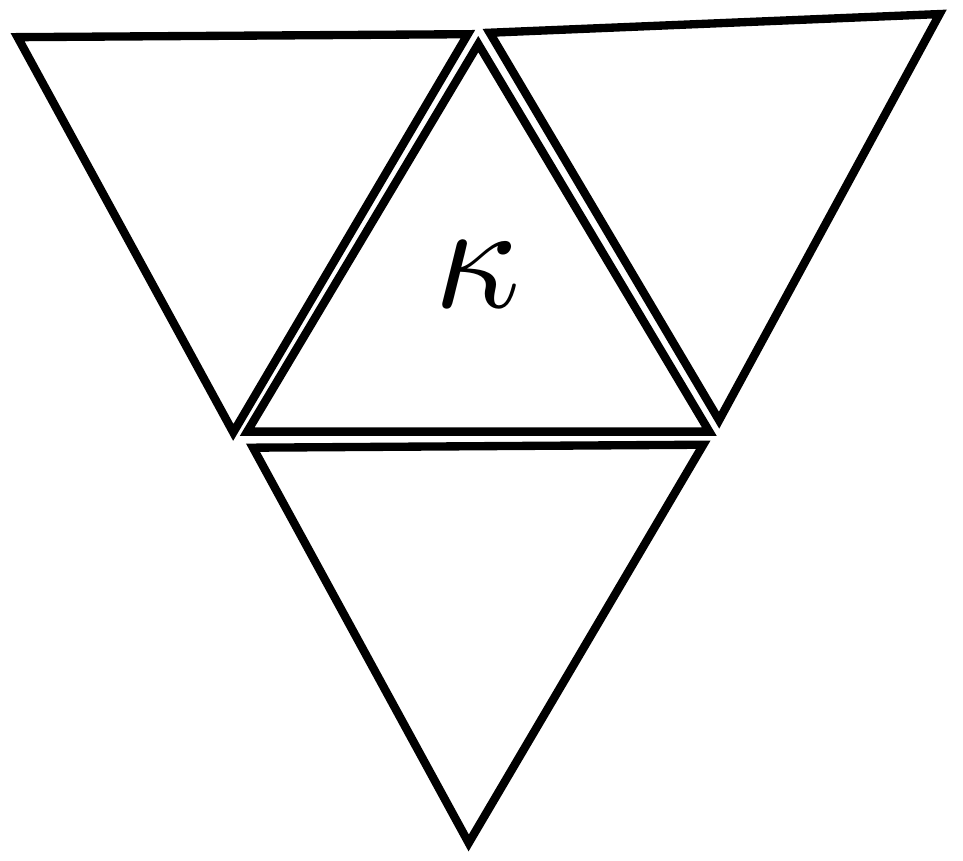} \qquad
\includegraphics[scale=0.2]{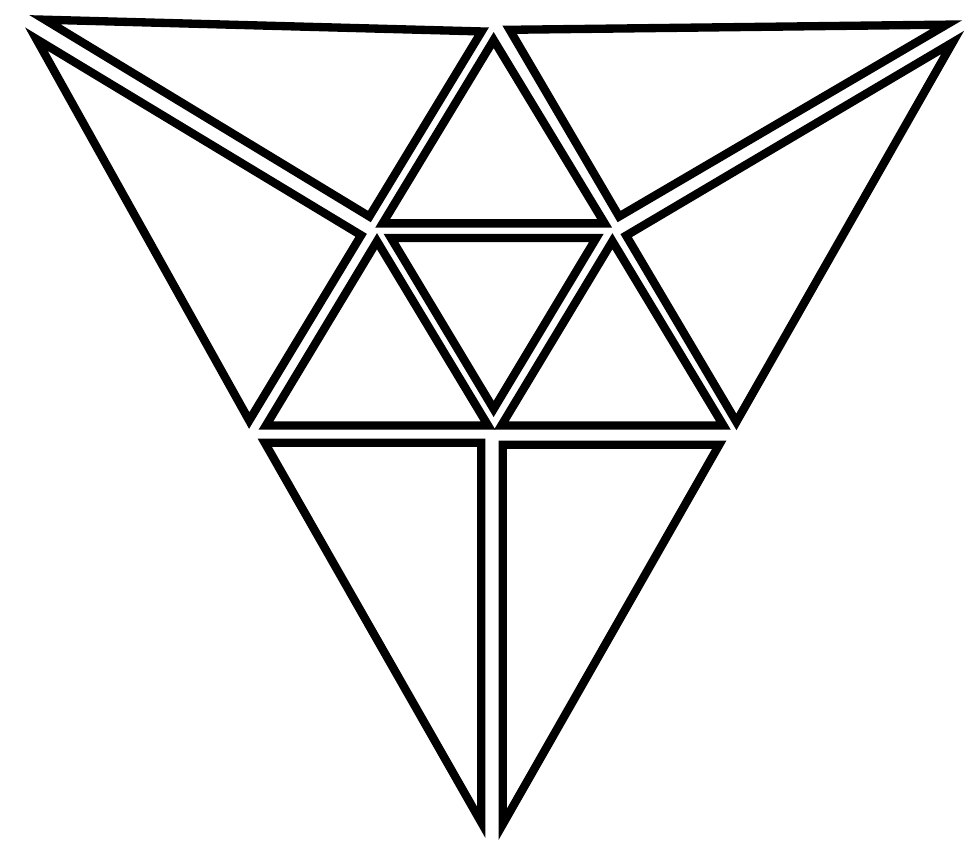} 
\end{center}
\caption{Local element patches in a triangular mesh $\T_N$, $N\ge 0$, in two--dimensions. \emph{Left:} Mesh patch $\TkN$ about an element $\kappa$, which consists of $\kappa$ and its face-wise neighbours. \emph{Right:} Mesh patch $\refmesh$ which is constructed
based on isotropically refining $\kappa$ (red refinement) and on a green refinement of
its neighbours.}
\label{fig:patches}
\end{figure}

\subsubsection{Local patch refinements}
For given $N\ge 0$ and an element $\kappa\in\T_N$, we first construct the
local mesh $\TkN$ comprising of $\kappa$ and its immediate
face-wise neighbours, cf. Fig.~\ref{fig:patches}~(left). Given $\TkN$,
we then uniformly (red) refine element $\kappa$ into~$n_\kappa$ sub-elements, where the introduction of any hanging
nodes may then be removed, if desired, by introducing additional (green) refinements; we denote this locally refined mesh by $\refmesh$, cf. Fig.~\ref{fig:patches}~(right).

\subsubsection{Local $h$-- and $p$--refinements}
We now outline the proposed competitive $h$--/$p$--refinements on a given finite element space $\X_N=\V(\T_N,\p_N$). To this end, for an element $\kappa \in \T_N$, we construct the local mesh patches $\refmesh$ and $\TkN$, as outlined above. Then, local space enrichments in terms of pure $h$--, respectively, $p$--refinement, may be considered as follows:

\begin{enumerate}[($hp$)]
\item[($h$)] Given $\kappa\in \T_N$, based on the locally refined patch $\refmesh$ we construct the finite element space $\Vh^\kappa_{N,\rm h}:=\V(\refmesh,\p^\kappa_{\rm h})$ where we define the global polynomial degree vector by
\[
\left.\p^\kappa_{\rm h}\right|_{\kappa'}:=
\begin{cases}
p_{\kappa}&\text{if }\kappa'\in\refmesh,\\
p_{\kappa'}&\text{otherwise},
\end{cases}
\]
where $p_\kappa$ denotes the polynomial degree on the element $\kappa \in \T_N$.

\item[($p$)] Secondly, we consider the locally refined space $\Vh^\kappa_{N,\rm p}:=\V(\TkN,\p^\kappa_{\rm p})$ for the element $\kappa$ that is solely based on $p$--refinement, where we define a modified (global) polynomial degree vector on $\T_N$ by
\[
\left.\p^\kappa_{\rm p}\right|_{\kappa'}:=
\begin{cases}
p_{\kappa}+1&\text{if }\kappa'\in\T^{\kappa}_N,\\
p_{\kappa'}&\text{otherwise}.
\end{cases}
\]
\end{enumerate}
We emphasise that all functions in $\Vh^\kappa_{N,\rm h}$ and $\Vh^\kappa_{N,\rm p}$ vanish outside of the local patch $\refmesh$, or equivalently, on~$\TkN$, cf.~\eqref{eq:locspace}.

\subsubsection{Local energy decays}
On a given mesh $\T_N$, $N\ge 0$, suppose that we have obtained a sufficiently accurate approximation $u_N^n \in \X_N$ of the fully discrete GPE~\eqref{eq:GPEdiscrete}, after performing $n\ge 1$ steps of the discrete  iteration~\eqref{eq:discreteGF}. We now propose an $hp$-type refinement of the mesh $\T_N$ based on a competitive local reduction strategy of the energy associated to the available approximation $u_N^n$. To this end, for each element $\kappa\in\T_N$, we collect the associated spaces for the proposed local $h$-- and $p$--refinements, cf. above, in the set
\[
\mathcal{V}^\kappa_{N}:=\left\{\Vh^\kappa_{N,\rm h}\right\}
\cup
\left\{\Vh^\kappa_{N,{\rm p}}\right\}.
\]
Recall that both of these spaces contain only functions that are locally supported in a vicinity of the element $\kappa$. Correspondingly, each space $\widehat\V^\kappa_N\in\mathcal{V}^\kappa_{N}$ is spanned by some locally supported and linearly independent functions $\xi^1_\kappa,\ldots,\xi^{m^\kappa}_\kappa$, namely,
\[
\widehat\V^\kappa_N=\Span\{\xi^1_\kappa,\ldots,\xi^{m^\kappa}_\kappa\},
\]
where the number of basis function $m^\kappa$ depends on the underlying refinement. Then, for the proposed local $h$-- or $p$--refinements, and for $u_N^n\in\X_N$ as above, define the augmented space
\begin{equation}\label{eq:W}
\Wh^\kappa_N(u_N^n):=\Span\{\xi^1_\kappa,\ldots,\xi^{m^{\kappa}}_\kappa,u_N^n\},
\end{equation}
which allows us to enrich the approximate solution $u_N^n$ by some local features on $\kappa$ (in terms of the respective $h$-- and $p$--refinements). Specifically, we perform one local fully discrete GFI-step~\eqref{eq:discreteGF} in $\Wh^\kappa_N(u_N^n)\subset \H$ in order to obtain a new local approximation, denoted by $\widetilde u_{N}^{\kappa,n} \in \Wh^\kappa_N(u_N^n)$, with the usual constraint $\norm{\widetilde u_{N}^{\kappa,n}}_{\L2}=1$. 

\begin{remark}
We emphasise that the augmented spaces $\Wh^\kappa_N(u_N^n)$, although they feature \emph{one global} degree of freedom due to the presence of the given approximation $u_N^n$ as a basis function, have small dimensions. Hence, in particular, the fully discrete GFI~\eqref{eq:discreteGF} based on $\Wh^\kappa_N(u_N^n)$ entails hardly any computational cost and can be undertaken in parallel for each of the proposed $h$-- and $p$--refinements (and for each element $\kappa$). 
\end{remark}

In general, we expect the above refinements associated to an element $\kappa\in\T_N$ will lead to a (local) energy decay. Here, we define
\begin{align} \label{eq:localenergydecay_h}
 \Delta \E^\kappa_{N,\rm h}:=(\E(u_N^n) -\E(\widetilde u_{N}^{\kappa,n}))/{\rm dofs}_{\rm h}^\kappa
\end{align}
to be the potential energy reduction, per degree of freedom, when $h$--refinement is employed on $\kappa$, where
${\rm dofs}_{\rm h}^\kappa$ denotes the number of degrees of freedom present in the locally $h$--refined space restricted to $\kappa$.
Similarly, the potential energy reduction, per degree of freedom, when $p$--refinement is utilised is given by
\begin{align} \label{eq:localenergydecay_p}
 \Delta \E^\kappa_{N,{\rm p}}:=(\E(u_N^n) -\E(\widetilde u_{N}^{\kappa,n}))/{\rm dofs}_{\rm p}^\kappa,
\end{align}
where ${\rm dofs}_{\rm p}^{\kappa}$ denotes the number of degrees of freedom present in the locally $p$--refined space restricted to $\kappa$. In this way, for each element $\kappa\in\T_N$, we may compute the value
\begin{equation}
\Delta \E^{\kappa}_{N,\max}
:= \max \left\{ \Delta \E^\kappa_{N,\rm h}, \Delta \E^\kappa_{N,{\rm p}}\right\},
\label{eq:max_error_reduction}
\end{equation}
which indicates the maximal potential energy reduction due to the two types of refinement of the element $\kappa$. 

Finally, we refine the set of elements $\mathcal{K} \in \mathcal{T}_N$ which satisfy the marking criterion
\begin{equation}\label{eq:marking}
\Delta \E^{\kappa}_{N,\max} \geq \theta \max_{\kappa \in \mathcal{T}_N} \Delta \E^{\kappa}_{N,\max},
\end{equation}
where $\theta\in(0,1)$ is a fixed marking parameter. Following \cite{Demkowicz:07,SolinSegethDolezel:04}, cf. also \cite{HoustonWihler:16}, we set $\theta = \nicefrac{1}{3}$. Once the elements in the current mesh $\mathcal{T}_N$ have been $hp$--refined, an $hp$--mesh smoother is then employed in order to construct the enriched finite element space $\X_{N+1}=\V(\T_{N+1},\p_{N+1}$). This is a two-stage process: firstly, hanging nodes created through mesh refinement are removed; here, elements containing two or more hanging nodes are isotropically {\em red} refined. This process is repeated until all elements possess at most one hanging node; the remaining hanging nodes are then removed by introducing temporary (green) refinements to yield a conforming mesh $\mathcal{T}_{N+1}$. Subelements created through this process automatically inherit the polynomial degree of their parent. In the second step, the polynomial degree vector is smoothed by ensuring that the maximal difference in the local polynomial degree between adjacent elements is at most one; this is achieved by enriching the element with the lowest order. The above competitive $hp$--refinement strategy is summarised in Algorithm~\ref{alg:hpFEMbasic}.

\begin{algorithm}[t!]
\begin{algorithmic}[1]
\caption{Energy-based competitive $hp$-adaptive refinement procedure}
\label{alg:hpFEMbasic}
\State{Input a finite element mesh $\T_N$ defined on the domain $\Omega$, an associated polynomial degree vector $\vec p_N$, and a function $u_N\in\V(\T_N,\vec p_N)$.}
\For {each element~$\kappa\in\T_N$}
\myState{\text{/\!/ $h$--refinement of $\kappa$}}
\myState{Find a basis $\{\xi^1_\kappa,\ldots,\xi^{m^{\kappa}}_\kappa\}$ of the locally $h$--enriched space $\Vh^\kappa_{N,\rm h}$.}
\myState{Perform one fully discrete GFI step, cf.~\eqref{eq:discreteGF}, on the local space $\Wh^\kappa_N(u_N)$ from~\eqref{eq:W}, and compute the corresponding predicted energy reduction $\Delta \E^\kappa_{N,\rm h}$, cf. \eqref{eq:localenergydecay_h}.}
\myState{/\!/ $p$--refinement of $\kappa$}
\myState{Find a basis $\{\xi^1_\kappa,\ldots,\xi^{\widetilde{m}^{\kappa}}_\kappa\}$ of the locally $p$--enriched space $\Vh^\kappa_{N,\rm p}$.}
\myState{Perform one fully discrete GFI step, cf.~\eqref{eq:discreteGF}, on the local space $\Wh^\kappa_N(u_N)$ from~\eqref{eq:W}, and compute the corresponding predicted energy reduction $\Delta \E^\kappa_{N,{\rm p}}$, cf. \eqref{eq:localenergydecay_p}.}
\myState{Compute the maximum local predicted error reduction $\Delta \E^{\kappa}_{N,\max}$, cf. \eqref{eq:max_error_reduction}.}
\EndFor
\State{Determine the subset of elements $\mathcal{K}$ which are flagged for refinement, based on the criterion~\eqref{eq:marking}.}
\State{Perform $h$-- or $p$--refinement on each~$\kappa\in\mathcal{K}$ according to which refinement takes the maximum in~\eqref{eq:max_error_reduction} and undertake $hp$-mesh smoothing. This results in a refined global finite element space~$\V(\T_{N+1},\p_{N+1})$.}
\end{algorithmic}
\end{algorithm}

\subsection{$hp$--adaptive strategy}

From a practical viewpoint, once the fully discrete GFI approximation from \eqref{eq:discreteGF} is close to a solution of the discrete GPE~\eqref{eq:GPEdiscrete}, on a given $hp$--finite element space $\X_{N}=\V(\T_{N},\p_{N})$, $N\ge 0$, we expect that any further GFI steps will no longer contribute an essential decay of the underlying energy. In this case, in order to further reduce the energy, we need to enrich the finite element space appropriately. Then, the fully discrete GFI iteration is reinitiated on the new space, $ \X_{N+1}=\V(\T_{N+1},\p_{N+1})$, and so on.
More specifically, for~$N \ge 0$, suppose that we have performed a reasonable number~$n \ge 1$ (possibly depending on~$N$) of GFI-iterations~\eqref{eq:discreteGF} in $\X_{N}$. Consider now an $hp$--enriched space~$\X_{N+1}=\V(\T_{N+1},\p_{N+1})$ as obtained by Algorithm~\ref{alg:hpFEMbasic}. Then we may embed (or project, if refinements are not hierarchical) the final guess $u^{n}_{N}\in \X_{N}$ on the previous space into the enriched finite element space~$\X_{N+1}$ in order to obtain an initial guess on 
\begin{equation}\label{eq:u0}
\X_N\ni u^n_{N}\hookrightarrow u^{0}_{N+1}\in\X_{N+1}.
\end{equation} 
For each GFI-iteration we monitor two quantities. Firstly, we introduce the energy \emph{increment} on each iteration given by
\begin{align}\label{eq:inc} 
	\incNk := \E(u_N^{n-1}) -  \E(u_N^n),\qquad n\ge 1.
\end{align}
Secondly, we compare $\incNk$ to the total energy \emph{decay} on the current space $\X_N$, i.e.,
\begin{align} \label{eq:diff}
	\dENk := \E(u_N^{0}) -  \E(u_N^n),\qquad n\ge1.
\end{align}
We stop the iteration for~$n\ge 1$ as soon as $\incNk$ becomes small compared to $\dENk$, i.e., once there is no notable (relative) benefit in performing any further discrete GFI steps on the current space $\X_{N}$. Specifically, for~$n\ge 1$, this is expressed by the bound
\[
	\incNk \le \gamma\dENk,
\]
for some parameter $0<\gamma<1$. 
We implement this procedure in Algorithm~\ref{alg:adaptive}.

\begin{algorithm}[t!]
\caption{$hp$--Adaptive fully discrete finite element gradient flow iteration}
\label{alg:adaptive}
\begin{algorithmic}[1]
\State Prescribe three parameters~$\theta=\nicefrac{1}{3}$, $\gamma\in(0,1)$, and $0<\tol\ll 1$.
\State Choose a sufficiently fine initial mesh~$\T_0$, and an initial guess~$u^0_0\in \S_\H$ with $(u^0_0,1)_{\L2} \geq 0$, cf.~\eqref{eq:unorm}. Set~$N:=0$.
\Loop 
	\State Set $n:=1$, perform one fully discrete GFI-step~\eqref{eq:discreteGF} on $\X_N$ to obtain $u_{N}^{1}\in\X_N$.
	\State Compute the indicator $\mathrm{inc}_N^1$ (which equals~$\diff_N^1$) from~\eqref{eq:inc}. 
	\While {$\incNk > \gamma\,\dENk$} 
		\State Update $n\gets n+1$.
		\State Perform one GFI-step~\eqref{eq:discreteGF} in $\X_N$ to obtain $u_{N}^{n}$ (from $u_N^{n-1}$).
		\State Compute the indicators $\incNk$ and $\dENk$ from~\eqref{eq:inc} and~\eqref{eq:diff}, respectively.
	\EndWhile
	\If {$\dENk > \tol\, \E(u_N^n)$}
		\State \multiline{\textsc{Mark} and adaptively $hp$--\textsc{refine} the mesh~$\T_N$ using Algorithm~\ref{alg:hpFEMbasic} in order to generate a new mesh~$\T_{N+1}$.}
		\State Define $u_{N+1}^0\in\X_{N+1}$ from $u_N^n\in \X_{N}$ by canonical embedding (or by projection), cf.~\eqref{eq:u0}.
		\State Update~$N\gets N+1$.
	\Else 
	\State \Return $u^n_N$.
	\EndIf
\EndLoop
\end{algorithmic}
\end{algorithm}

\begin{remark}
In practical computations, in order to guarantee a positive energy decay in each iteration step, we propose the time step strategy within~\eqref{eq:discreteGF} given by
\begin{align*} 
 \tau_N^{n} = \max\left\{2^{-m}:\, \E(u_N^{n+1}(2^{-m})) < \E(u_N^n),\,m\ge 0\right\},\qquad  n \geq 0.
\end{align*}
where, for~$0<s\le 1$, we write $u_N^{n+1}(s)$ to denote the output of the discrete GFI \eqref{eq:discreteGF} based on the time step $\tau_N^n=s$ and on the previous approximation~$u_N^n$. We have observed in several examples that for the choice $\tau_N^0=1$, i.e., using $m=0$ above, no time correction was needed. For that reason, and for the sake of keeping the computational cost minimal, we fix the time step $\tau=1$ in the local one-step GFI in Algorithm~\ref{alg:hpFEMbasic}; we use, however, the time step strategy for the global GFI in Algorithm~\ref{alg:adaptive}.
\end{remark}

\section{Numerical Experiments} \label{sec:numerics}

In this section, we present a series of numerical experiments to investigate the practical performance of the proposed $hp$--refinement algorithm outlined in Algorithms~\ref{alg:hpFEMbasic}~\&~\ref{alg:adaptive}. Throughout this section, we set $\theta=\nicefrac{1}{3}$, cf. above, and $\gamma=10^{-3}$. The selection of $\tol$ is problem dependent in order to compute a highly accurate value of the ground-state energy for the example at hand. Throughout this section we compare the performance of the proposed $hp$--adaptive refinement strategy with the corresponding algorithm based on exploiting only local mesh subdivision, i.e., $h$--refinement, cf.~\cite{HeidStammWihler:21}. Furthermore, to compute the solution of the underlying linear problem~\eqref{eq:G_discrete}, we employ the direct MUltifrontal Massively Parallel Solver (MUMPS), see \cite{MUMPS:1,MUMPS:2,MUMPS:3}.

\subsection{GPE without angular momentum operator $\omega = 0$}

Firstly, we consider a series of numerical examples where the angular momentum operator is 
omitted, i.e., when  $\omega = 0$. In particular, we demonstrate the performance of the proposed $hp$--adaptive refinement algorithm on a series of examples presented in~\cite{HeidStammWihler:21}. Throughout this section, we select the initial ground-state $u_0^0 \in \X_0 \subset \H$, such that $u_0^0(\x)=c$ for any node $\x$ in the interior of the initial mesh $\mathcal{T}_0$, where, $c > 0$ is the appropriate real constant which fulfils the norm constraint $\norm{u_0^0}_{\L2}=1$.

\subsubsection{Non-linear GPE with harmonic confinement potential}

In this first example we consider a non-linear Bose-Einstein condensate with $\beta = 1000$, $V = \nicefrac{1}{2}(x^2+y^2)$, ($\omega = 0$), on the computational domain $\Omega=(-6,6)^2$. This example has been previously considered in \cite{henning2020sobolev} and \cite{HeidStammWihler:21}, where the approximations $\E(u_\mathrm{GS}) \approx 11.9860647$ and $\E(u_\mathrm{GS}) \approx 11.98605121$ have been computed, respectively, for the energy of the ground-state. Based on employing the $hp$--refinement algorithm proposed in this article, cf.~Algorithms~\ref{alg:hpFEMbasic}~\&~\ref{alg:adaptive}, we compute a reference value of $\E(u_\mathrm{GS}) \approx \underline{11.98605114667}1144$. Here, the underlined digits are stable in the sense that our computations indicate that they do not change as further iterations of the $hp$--refinement strategy are computed.

\begin{figure}[t!]
\begin{center}
\begin{tabular}{cc}
\includegraphics[width=0.45\textwidth]{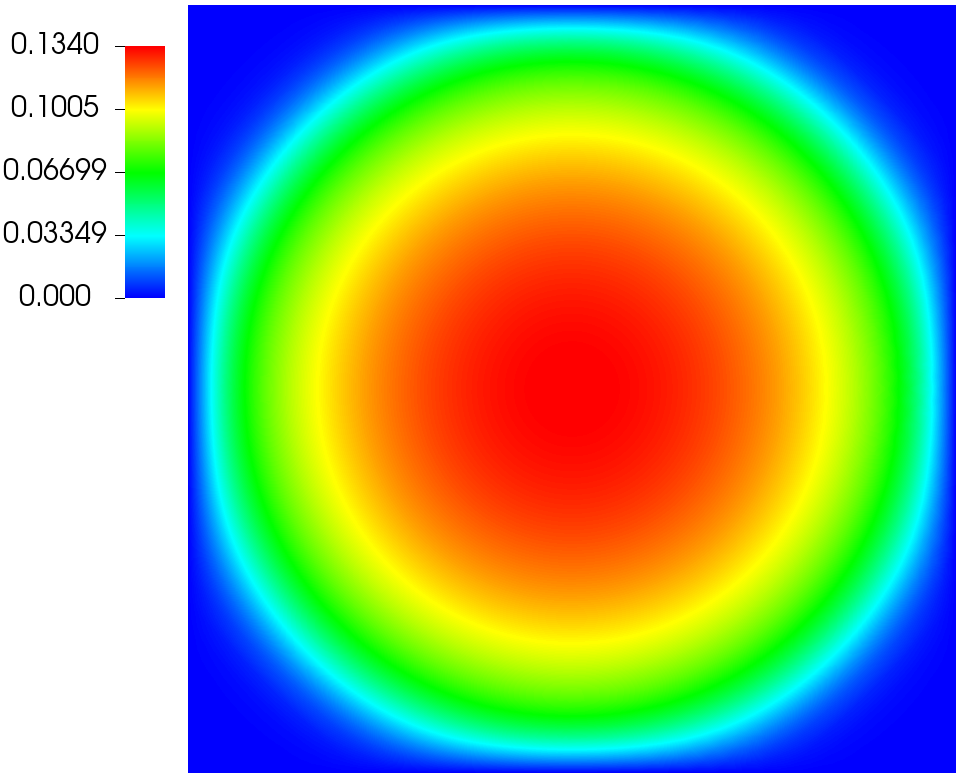} &
\includegraphics[width=0.45\textwidth]{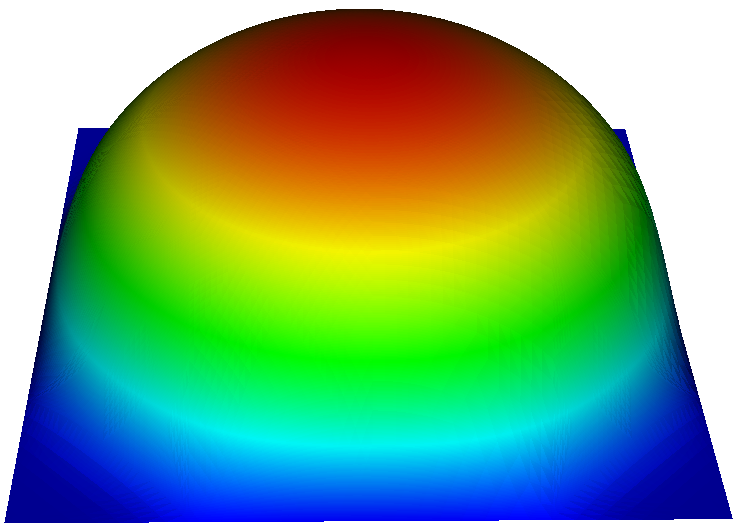} \\
\multicolumn{2}{c}{(a)} \\
& \\
\includegraphics[scale=0.43]{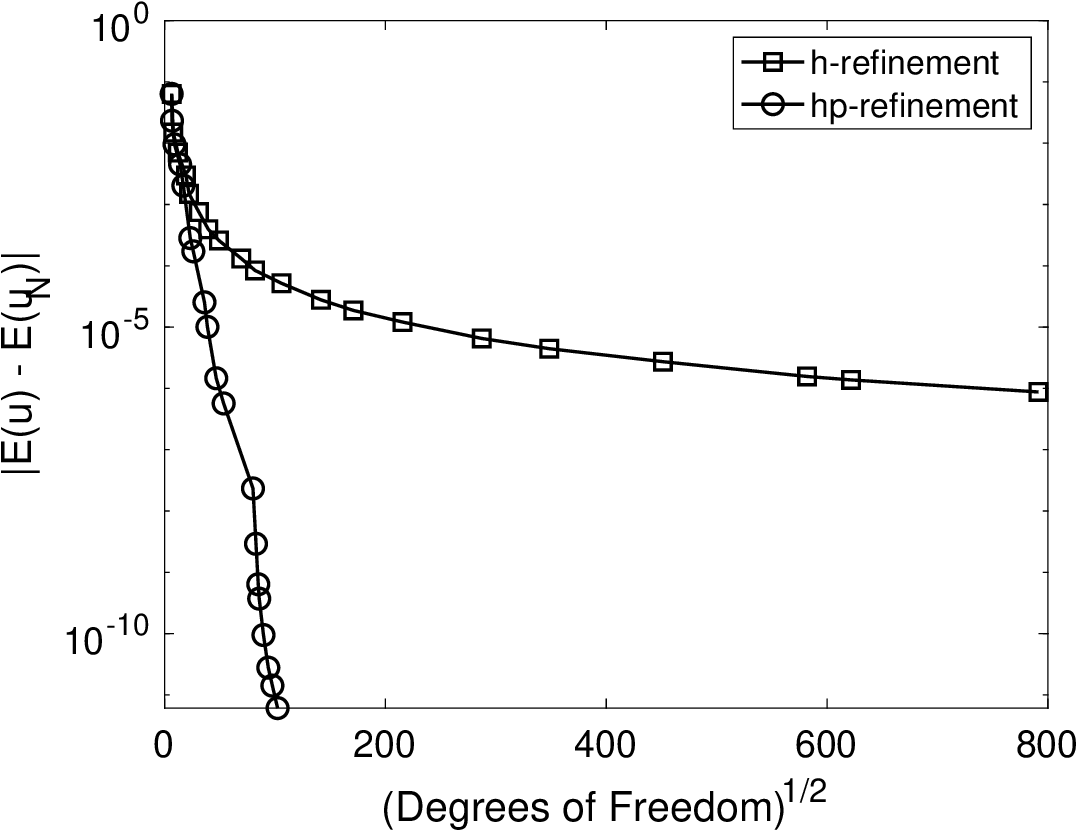} &
\includegraphics[scale=0.43]{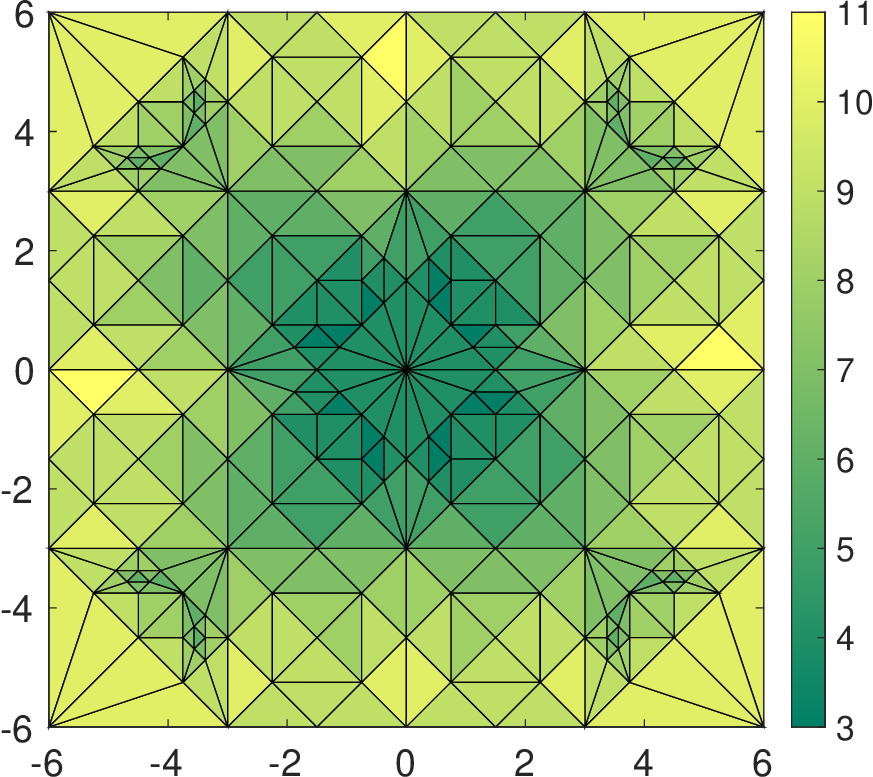} \\
(b) & (c)
\end{tabular}
\end{center}
\caption{Non-linear GPE with harmonic confinement potential. (a) Computed ground-state $u_N$; (b) Comparison of the error with respect to the square root of the
number of degrees of freedom; (c) Final $hp$-mesh.}
\label{fig:harmonic_confinement}
\end{figure}

The computed approximation to the ground-state $u^n_N \equiv u_N\approx u_\mathrm{GS}$ is depicted in Figure~\ref{fig:harmonic_confinement}(a), the shape of which resembles an `upside down bowl'. Furthermore, in Figure~\ref{fig:harmonic_confinement}(b) we illustrate the performance of the proposed $hp$--adaptive algorithm, cf. Algorithms~\ref{alg:hpFEMbasic}~\&~\ref{alg:adaptive}. Here, we plot the error of the energy $\E(u_N)-\E(u_{\mathrm{GS}})$ against the square root of the number of degrees of freedom employed within the finite element space $\X_{N}$, on a linear–log scale, based on employing both $h$-- and $hp$--refinement. We observe that the $hp$--refinement algorithm leads to an exponential decay in the error of the approximated ground-state energy as the finite element space $\X_{N}$ is adaptively enriched: indeed, on a linear-log plot, the convergence line is roughly straight. Moreover, we observe the superiority of $hp$--refinement in comparison with a standard $h$--refinement algorithm, cf.~\cite{HeidStammWihler:21}, in the sense that the former refinement strategy leads to several orders of magnitude reduction in the error in $\E$, for a given number of degrees of freedom, than the corresponding quantity computed exploiting mesh subdivision only.

Finally, in Figure~\ref{fig:harmonic_confinement}(c) we show the final $hp$--refined mesh generated by the proposed refinement strategy. Here we observe that some $h$--refinement has been undertaken around the base of the (upside down) bowl profile of the ground-state, as well as in the centre of the domain. The polynomial degree has then been enriched where the ground-state has steep gradients, as we would expect.

\subsubsection{Non-linear GPE with optical lattice potential} 

\begin{figure}[t!]
\begin{center}
\begin{tabular}{cc}
\includegraphics[width=0.42\textwidth]{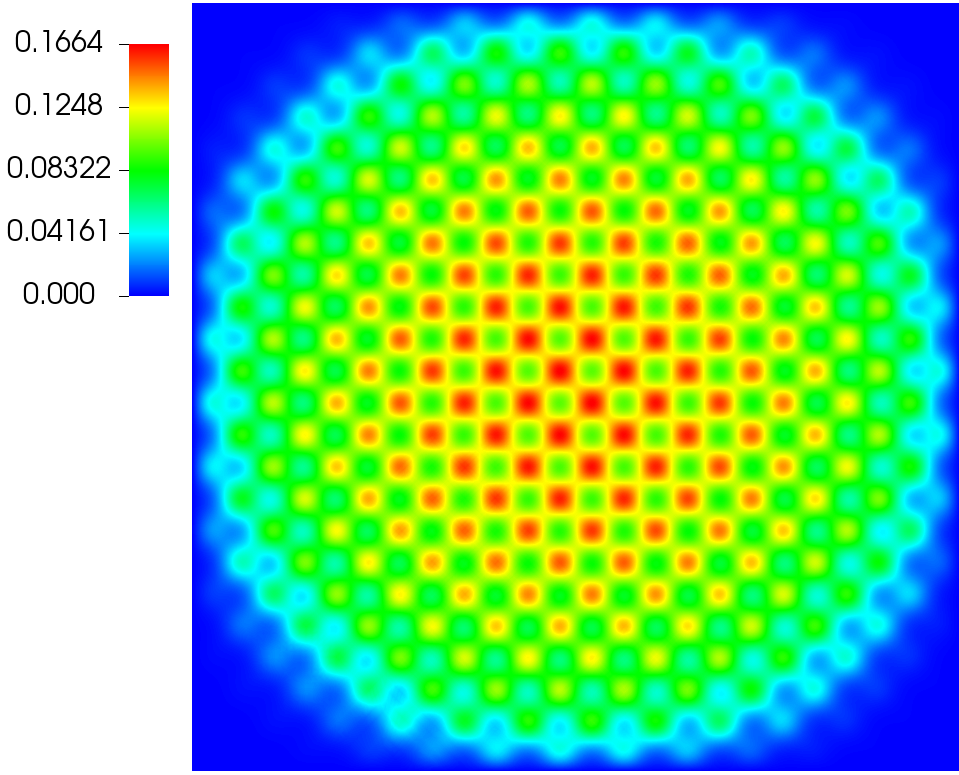} &
\includegraphics[width=0.35\textwidth]{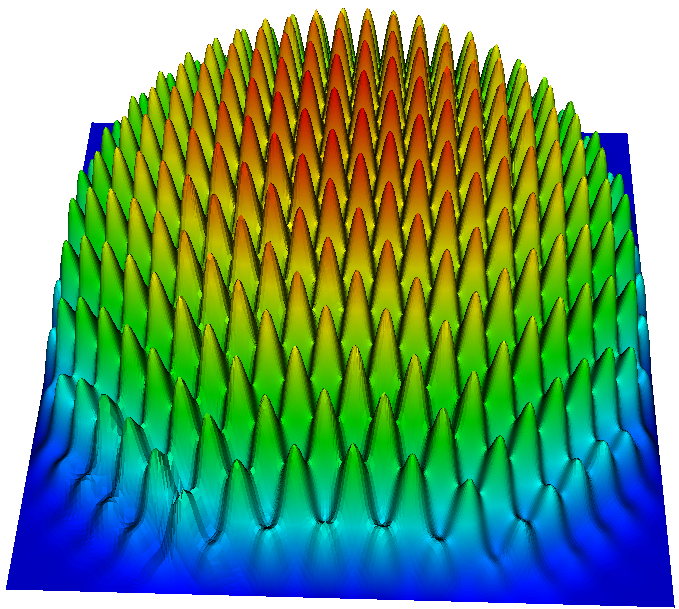} \\
\multicolumn{2}{c}{(a)} \\
& \\
\includegraphics[scale=0.4]{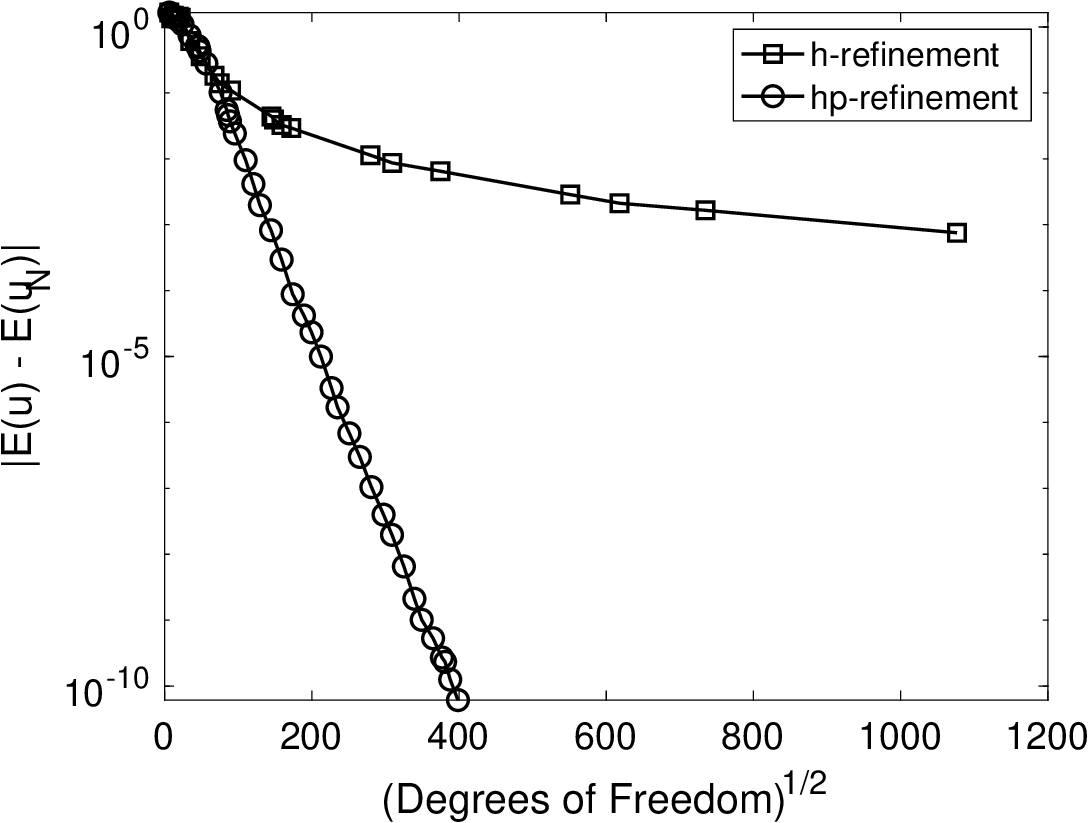} &
\includegraphics[scale=0.4]{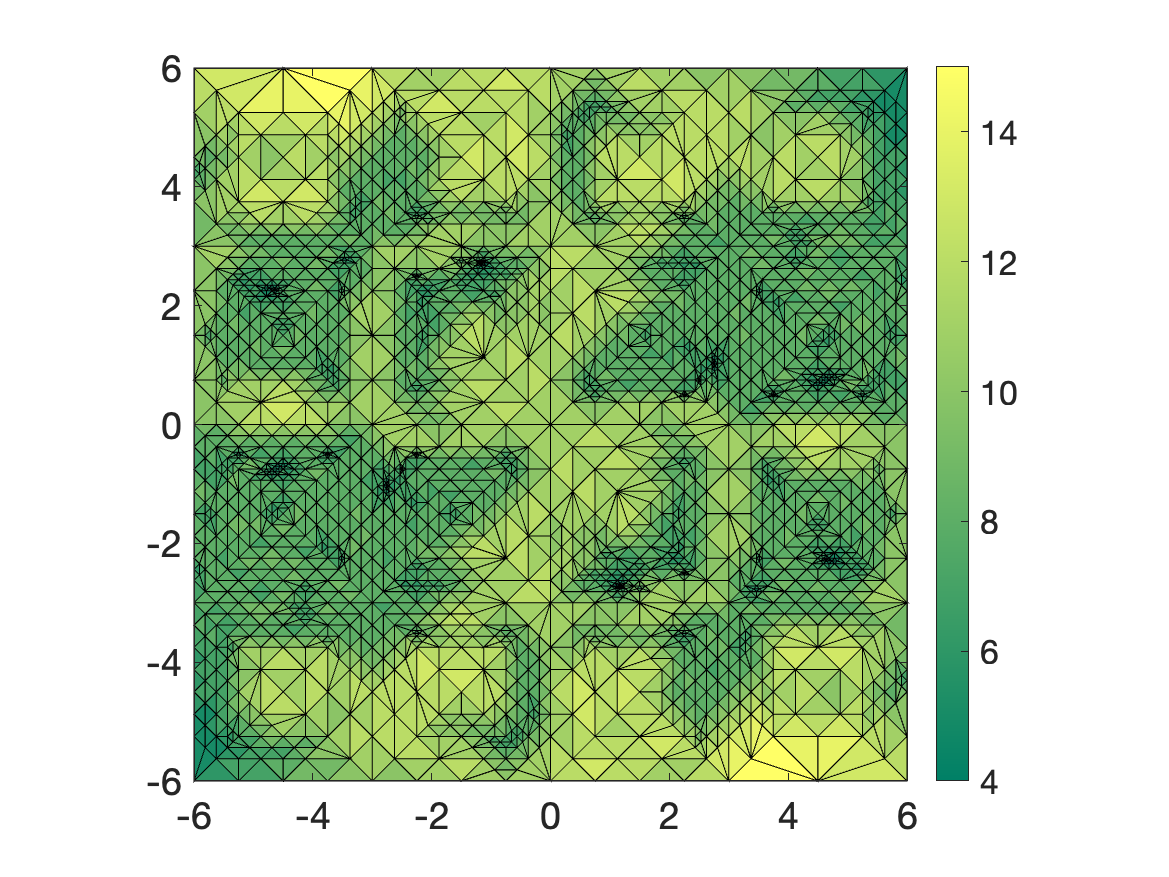} \\
(b) & (c)
\end{tabular}
\end{center}
\caption{Non-linear GPE with optical lattice potential. (a) Computed ground-state $u_N$; (b) Comparison of the error with respect to the square root of the
number of degrees of freedom; (c) Final $hp$-mesh.}
\label{fig:optical_lattice_potential}
\end{figure}

In this second example, we consider a more challenging problem with an oscillating potential function $V$; to this end, we select $\Omega = (-6,6)^2$, $\beta=1000$,
$V = \nicefrac{1}{2}(x^2+y^2)+20+20 \sin(2\pi x) \sin(2\pi y)$, and again $\omega=0$. The computed approximation to the ground-state is depicted in Figure~\ref{fig:optical_lattice_potential}(a); in this setting $u_\mathrm{GS}$ resembles a `hedgehog' structure consisting of a large number of very thin spikes.
Computations undertaken in \cite{henning2020sobolev} and \cite{HeidStammWihler:21} give values of the ground-state energy $\E(u_\mathrm{GS}) \approx 30.40965$ and $\E(u_\mathrm{GS}) \approx 30.387533$, respectively. Based on exploiting $hp$--adaptivity, we have computed a reference value of $\E(u_\mathrm{GS}) \approx \underline{30.3874145736}32611$.

The performance of the proposed $hp$--refinement algorithm is depicted in Figure~\ref{fig:optical_lattice_potential}(b); here, again we observe exponential convergence of the error in the computed ground-state energy, despite the fine scale structures present in the ground-state. Moreover, $hp$--refinement clearly outperforms its $h$--version counterpart. The final $hp$--mesh shown in Figure~\ref{fig:optical_lattice_potential}(c) indicates that some $h$--refinement is necessary across most of the domain, with again $p$--enrichment being undertaken in smooth regions.

\subsubsection{Non-linear energy functional with a nonsymmetric potential
$V$}

The final example before we consider the addition of angular momentum involves employing the nonsymmetric potential $V = \nicefrac{1}{2}\left(x^2+y^2 + 8 \exp(-(x-1)^2-y^2)\right)$, with $\beta = 200$ and $\omega = 0$ posed on the domain $\Omega=(-8,8)^2$, cf. \cite{BaoDu:04} and \cite{HeidStammWihler:21} who computed the approximations
$\E(u_\mathrm{GS}) \approx 5.8507$ and $\E(u_\mathrm{GS}) \approx 5.85058738$, respectively. Here, we have computed
$\E(u_\mathrm{GS}) \approx \underline{5.850587113515}1475$. The ground-state consists of a single hill with a small circular region removed, cf. Figure~\ref{fig:nonsymmetric_potential}(a).

Figure~\ref{fig:nonsymmetric_potential}(b) again highlights the potential benefits of employing $hp$--adaptivity: exponential convergence of the error in the computed ground-state energy is observed, as well as its superiority in terms of the computed error versus the number of degrees of freedom employed, when compared with mesh subdivision ($h$--refinement). The final $hp$--mesh depicted in Figure~\ref{fig:nonsymmetric_potential}(c) highlights that some $h$-refinement is necessary around the top of the hill, while largely $p$--enrichment is undertaken elsewhere.

\begin{figure}[t!]
\begin{center}
\begin{tabular}{cc}
\includegraphics[width=0.45\textwidth]{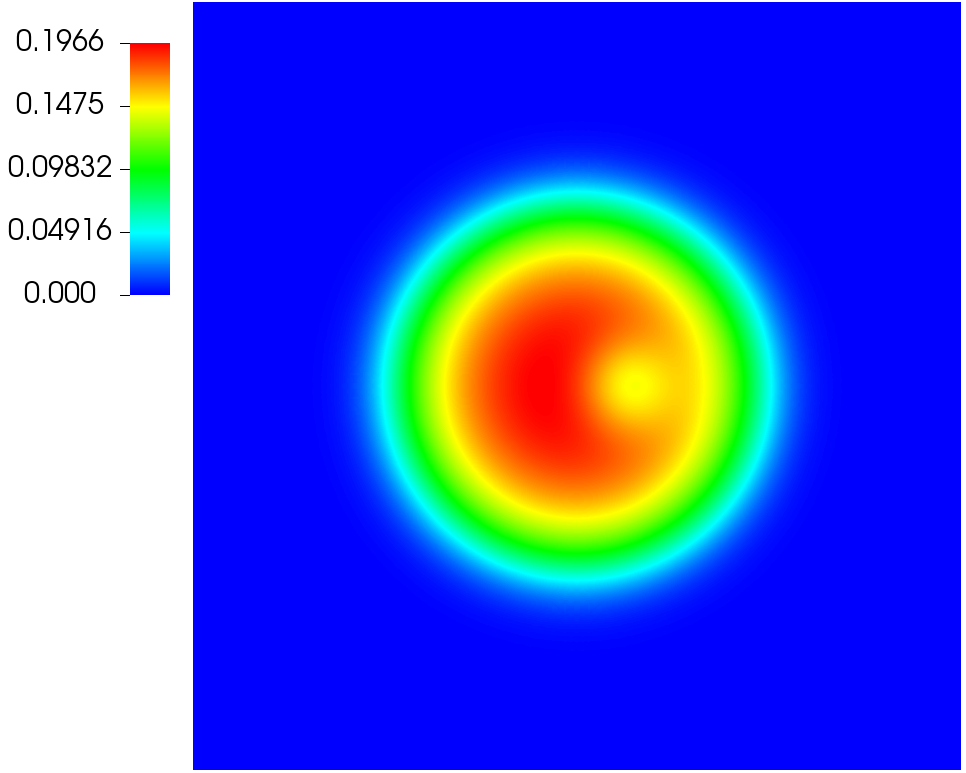} &
\includegraphics[width=0.4\textwidth]{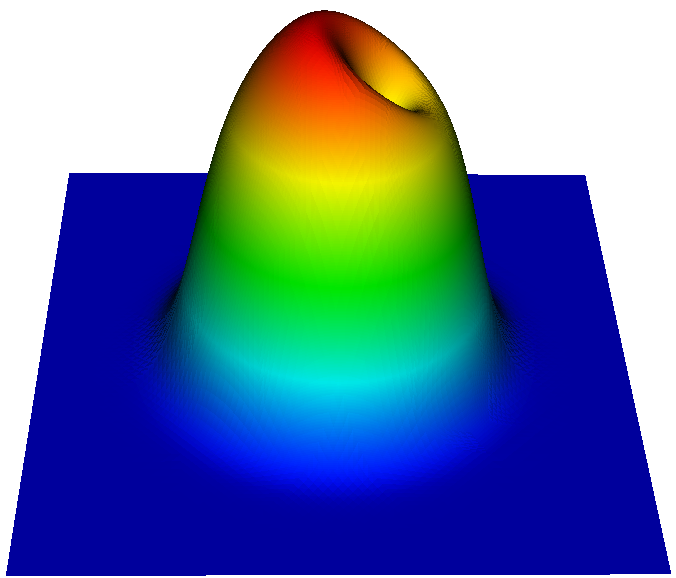} \\
\multicolumn{2}{c}{(a)} \\
& \\
\includegraphics[scale=0.43]{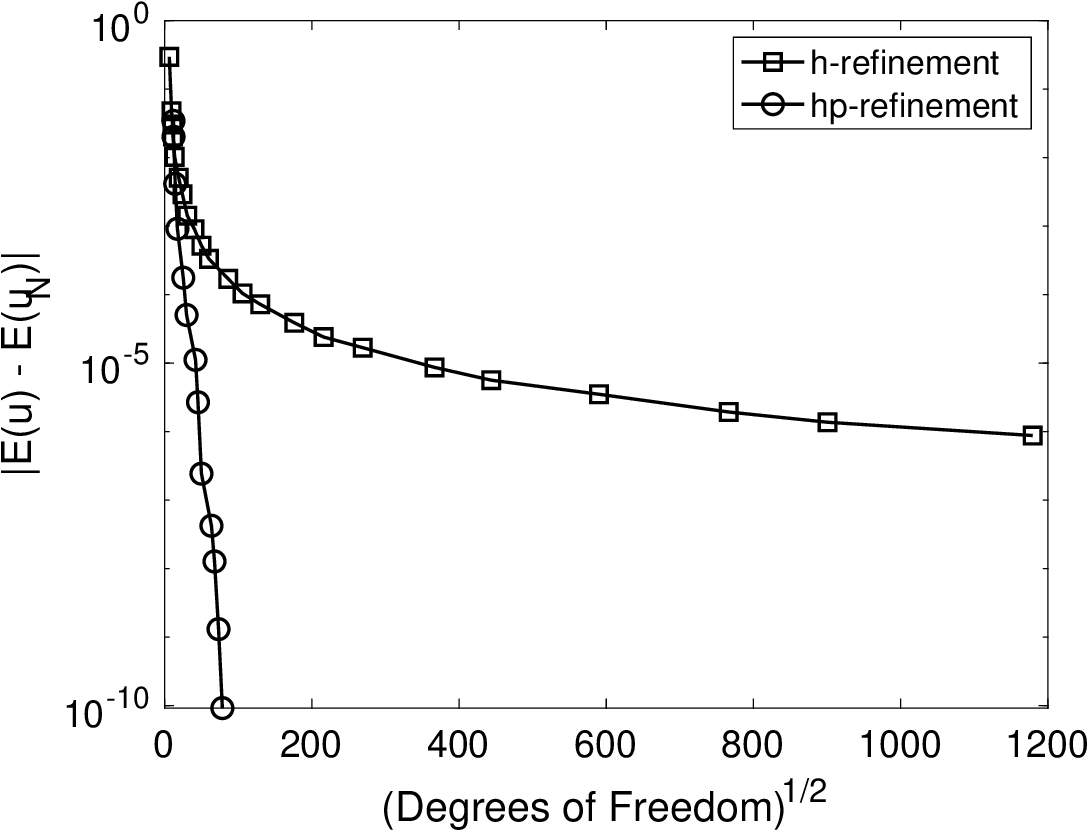} &
\includegraphics[scale=0.43]{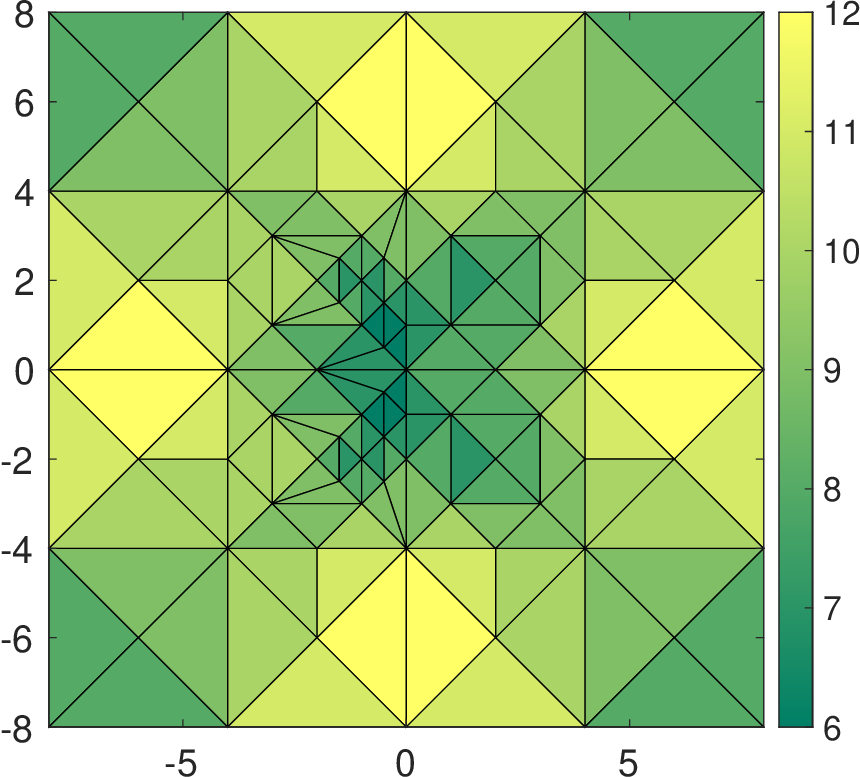} \\
(b) & (c)
\end{tabular}
\end{center}
\caption{Non-linear GPE with a nonsymmetric potential $V$. (a) Computed ground-state $u_N$; (b) Comparison of the error with respect to the square root of the
number of degrees of freedom; (c) Final $hp$-mesh.}
\label{fig:nonsymmetric_potential}
\end{figure}

\subsection{GPE with angular momentum operator $\omega>0$}

\begin{table}[t!]
    \begin{center}
    \begin{tabular}{c|c|c|c}
       $\omega$  & $\beta$ & $L$  & $\E(u_\mathrm{GS})$  \\ \hline
       $0.5$     &  $10$   & $6$  & $\underline{1.592319024681}3326$ \\
       $0.75$    &  $100$  & $6$  & $\underline{3.38102774209}47901$  \\
       $0.8$     &  $500$  & $10$ & $\underline{6.09974}39947822603$ \\
       $0.9$     & $1000$  & $12$ & $\underline{6.360}9757543503642$ \\
       \\
    \end{tabular}
    \end{center}
    \caption{Energy functional values for $\omega>0$.}
    \label{table:energies_for_positive_omega}
\end{table}

\begin{figure}[t!]
\begin{center}
\begin{tabular}{cc}
\includegraphics[width=0.45\textwidth]{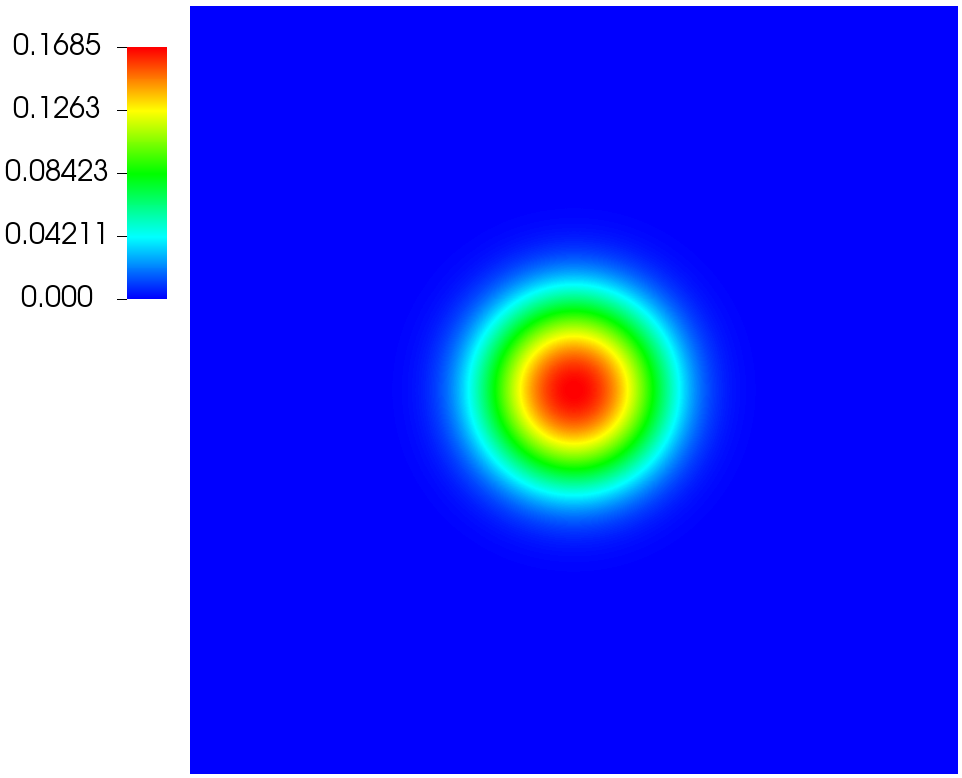} &
\includegraphics[width=0.35\textwidth]{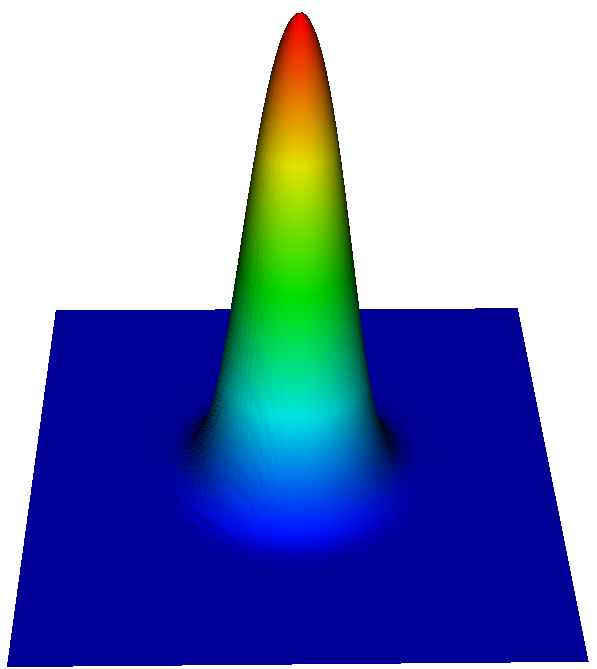} \\
\multicolumn{2}{c}{(a)} \\
& \\
\includegraphics[scale=0.43]{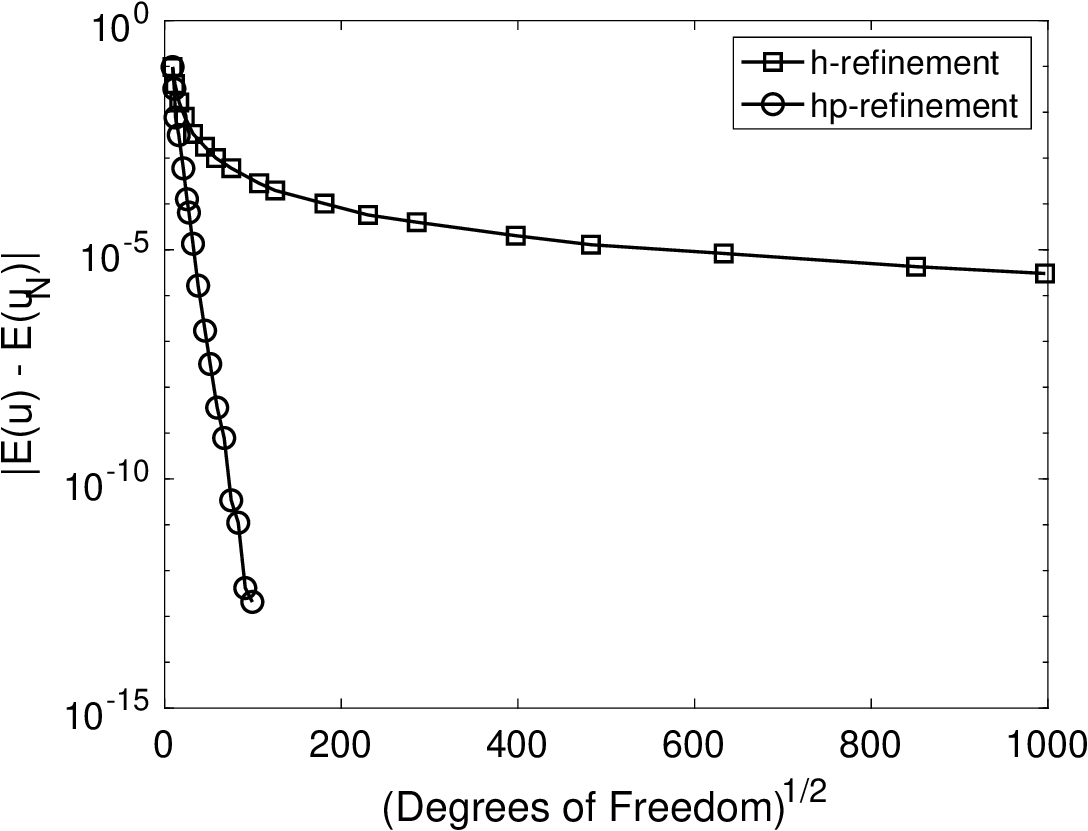} &
\includegraphics[scale=0.43]{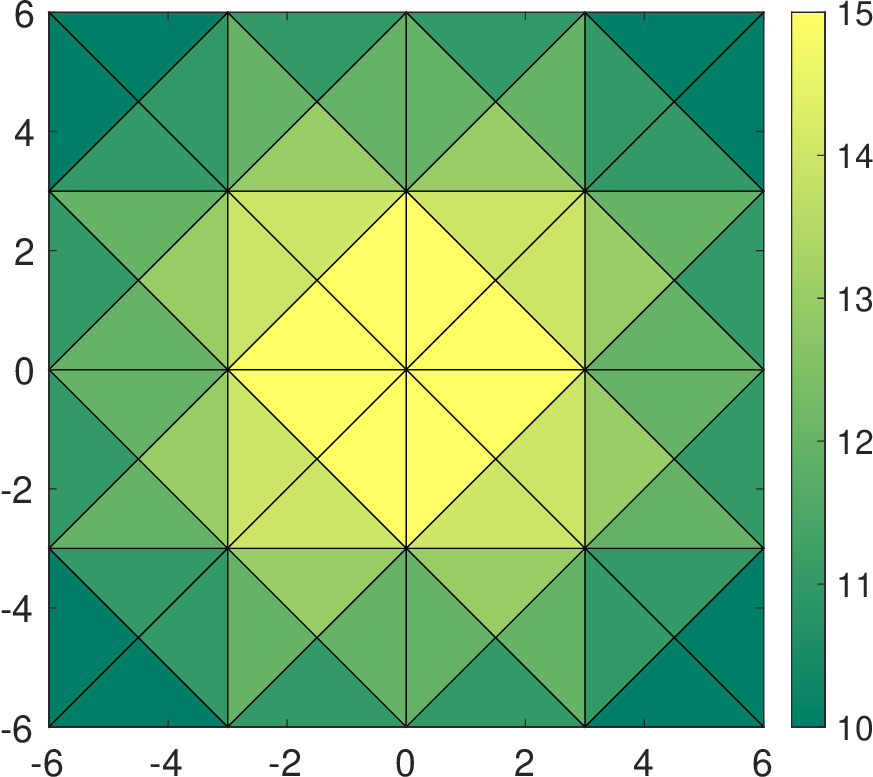} \\
(b) & (c)
\end{tabular}
\end{center}
\caption{GPE with angular momentum: $\omega = 0.5$, $\beta = 10$. (a) Computed ground-state density $|u_N|^2$; (b) Comparison of the error with respect to the square root of the
number of degrees of freedom; (c) Final $hp$-mesh.}
\label{fig:omega_0.5_beta_10}
\end{figure}

Finally, in this section, we now turn our attention to the case when the angular momentum operator is present in the underlying energy functional, i.e., when $\omega>0$. To this end, we set $\Omega = (-L,L)$, for some $L>0$, $V = \nicefrac{1}{2}(x^2+y^2)$, and consider the range of $\omega$ and $\beta$, for a given $L$, presented in Table~\ref{table:energies_for_positive_omega}. In addition, here we report the minimal energy computed using the proposed $hp$--refinement algorithm based on selecting the initial condition
\begin{align*}
u^0(x,y)=\frac{(1-\omega)\varphi_0(x,y)+\omega (x+iy) \varphi_0(x,y)}{\norm{(1-\omega)\varphi_0(x,y)+\omega (x+iy) \varphi_0(x,y)}_{\L2}},
\end{align*}
where
\[
\varphi_0(x,y)=\frac{1}{\sqrt{\pi}} \exp\left(-\nicefrac{(x^2+y^2)}{2}\right),
\]
with zero boundary conditions, cf. \cite{bao2005ground,danaila2017computation,Wu:2017}. We remark that the dependence on the computed ground-state on the choice of $u^0$ has been studied, for example, in \cite{bao2005ground,Wu:2017}; both articles recommend the selection of $u^0$ above (or its complex conjugate), particularly for large $\omega$.
Various values for the ground-state energy have been computed in the literature; in particular, \cite{bao2005ground} computed $\E(u_\mathrm{GS}) \approx 1.5914$ and $\E(u_\mathrm{GS}) \approx 3.371$ for the cases $\omega = 0.5$, $\beta=10$ and $\omega = 0.75$, $\beta=100$, respectively. From \cite{Wu:2017}, for $\omega = 0.8$ and $\beta=500$ values of $\E(u_\mathrm{GS})$ in the range $[6.0997,6.1055]$ were computed. Finally, when $\omega = 0.9$, and $\beta=1000$, a range of values $\E(u_\mathrm{GS}) \in [6.3615,6.3621]$ were computed in \cite{danaila2010new}, and in \cite{Wu:2017} $\E(u_\mathrm{GS})$ was estimated to be in the range $[6.3603,6.3608]$.
We point out that the different ground-state energies have been computed using a variety of numerical methods, employing different initial conditions and at times different computational domains $\Omega$.

\begin{figure}[t!]
\begin{center}
\begin{tabular}{cc}
\includegraphics[width=0.45\textwidth]{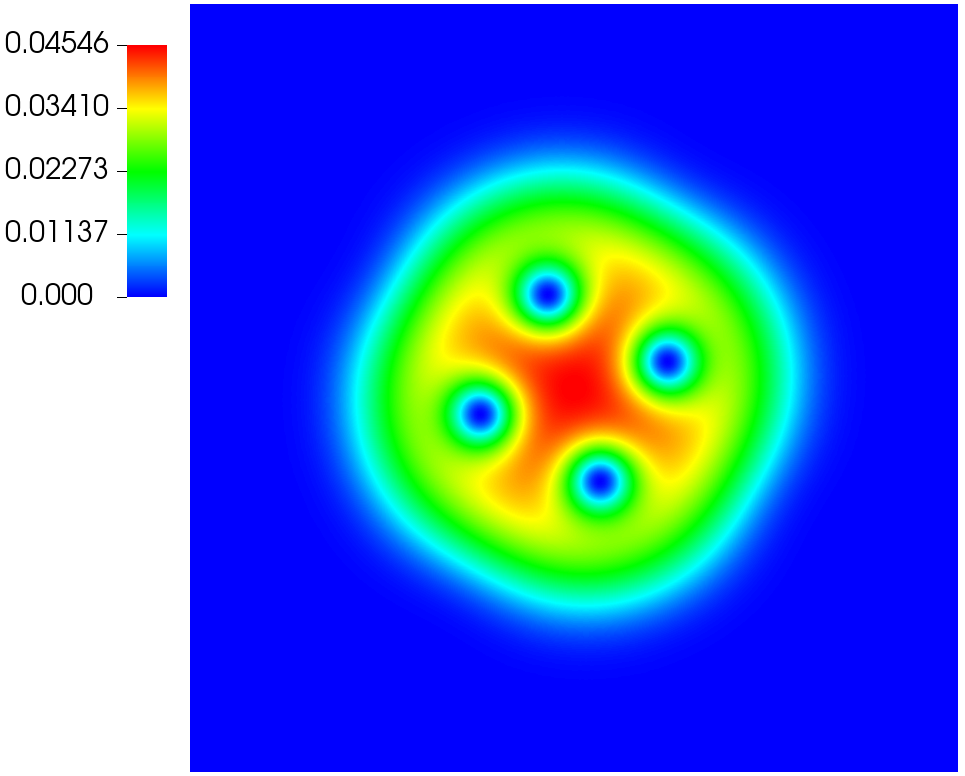} &
\includegraphics[width=0.4\textwidth]{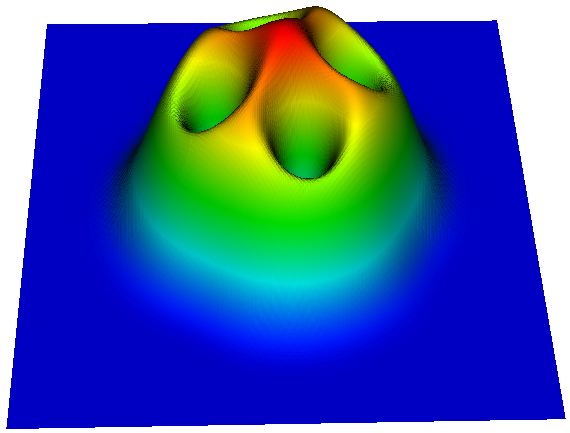} \\
\multicolumn{2}{c}{(a)} \\
& \\
\includegraphics[scale=0.4]{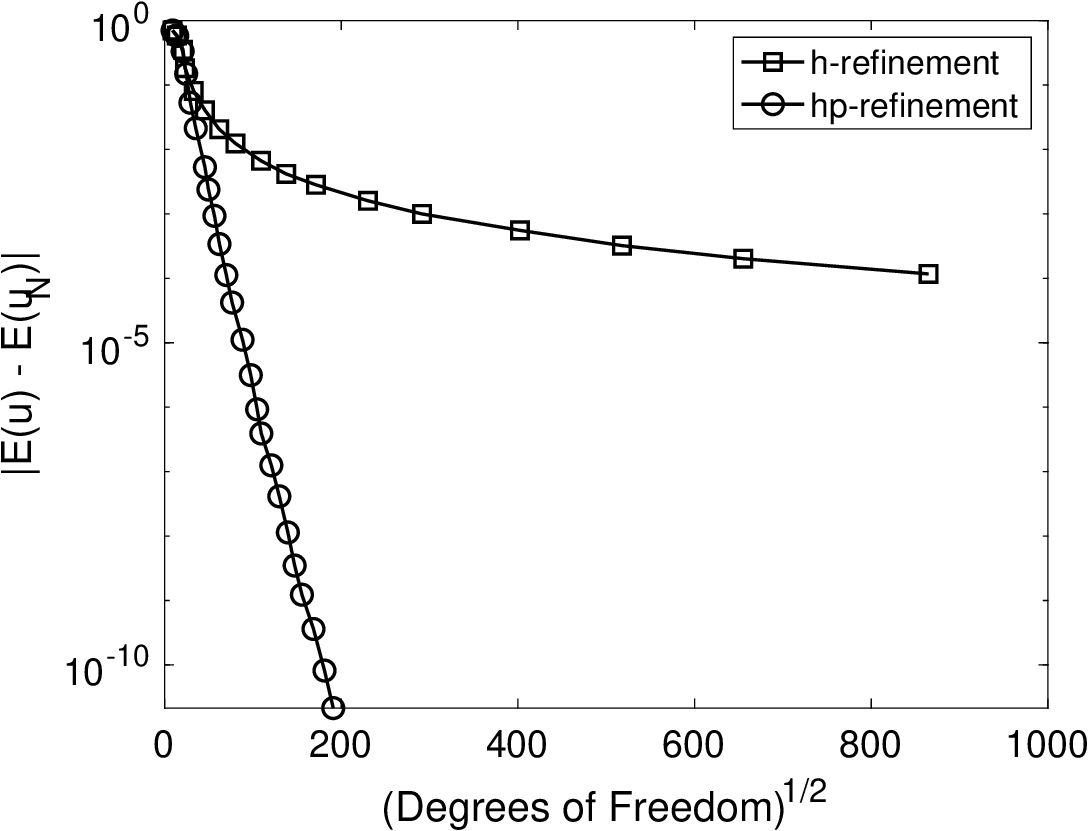} &
\includegraphics[scale=0.43]{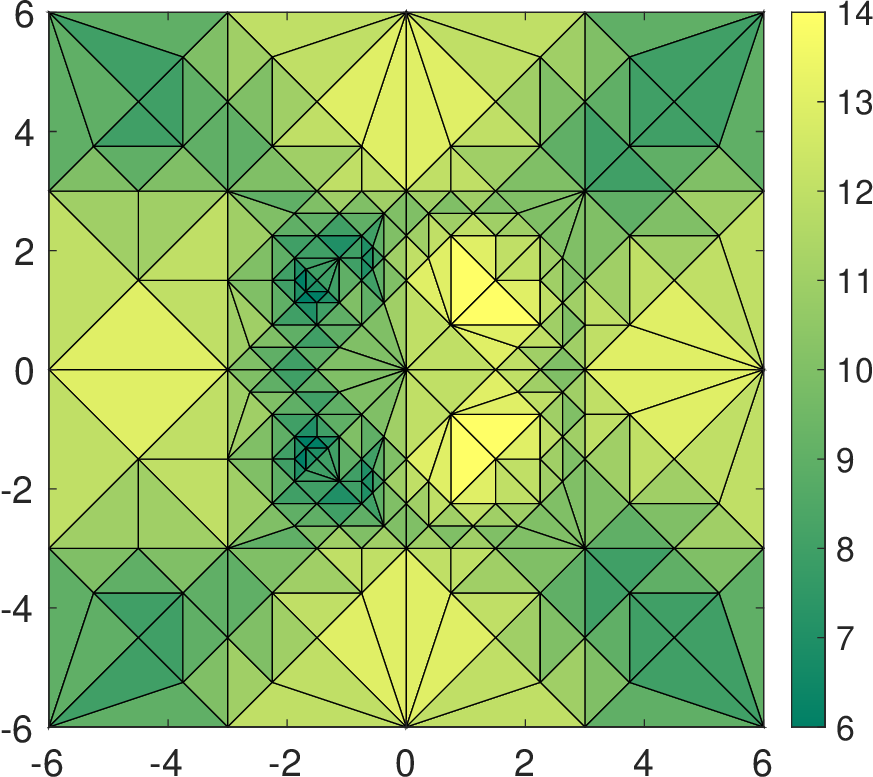} \\
(b) & (c)
\end{tabular}
\end{center}
\caption{GPE with angular momentum: $\omega = 0.75$, $\beta = 100$. (a) Computed ground-state density $|u_N|^2$; (b) Comparison of the error with respect to the square root of the
number of degrees of freedom; (c) Final $hp$-mesh.}
\end{figure}

Numerical experiments for each of the values of $\omega$ and $\beta$ given in Table~\ref{table:energies_for_positive_omega} are presented in Figures~\ref{fig:omega_0.5_beta_10}--\ref{fig:omega_0.9_beta_1000}. As before, we show the computed ground-state, the comparison between the error in the approximated ground-state energy when employing both $hp$-- and $h$--refinement, and the final $hp$--mesh. We note that as $\omega$ is increased, the number of vortices present in the ground-state increases; for the case when $\omega = 0.9$ and $\beta = 1000$, the ground-state features over 50 vortices arranged in a regular triangular lattice referred to as the Abrikosov lattice. In all cases we again observe that the proposed $hp$--refinement algorithm leads to exponential convergence of the error in the approximated ground-state energy; moreover, the efficiency of employing $hp$--refinement when compared to mesh subdivision alone is clearly highlighted. As the number of vortices increases with each of the test cases considered here, we observe that the accuracy attained for the computed ground-state energy decreases. Moreover, we also observe that more refinement of the computational mesh is necessary, before subsequent $p$--refinement is employed to yield exponential convergence of the error in the ground-state energy, as $\omega$ is increased. In the simplest setting, i.e., when $\omega=0.5$ and $\beta=10$ we observe that the proposed $hp$--refinement algorithm simply defaults to polynomial enrichment only.

\begin{figure}[t!]
\begin{center}
\begin{tabular}{cc}
\includegraphics[width=0.45\textwidth]{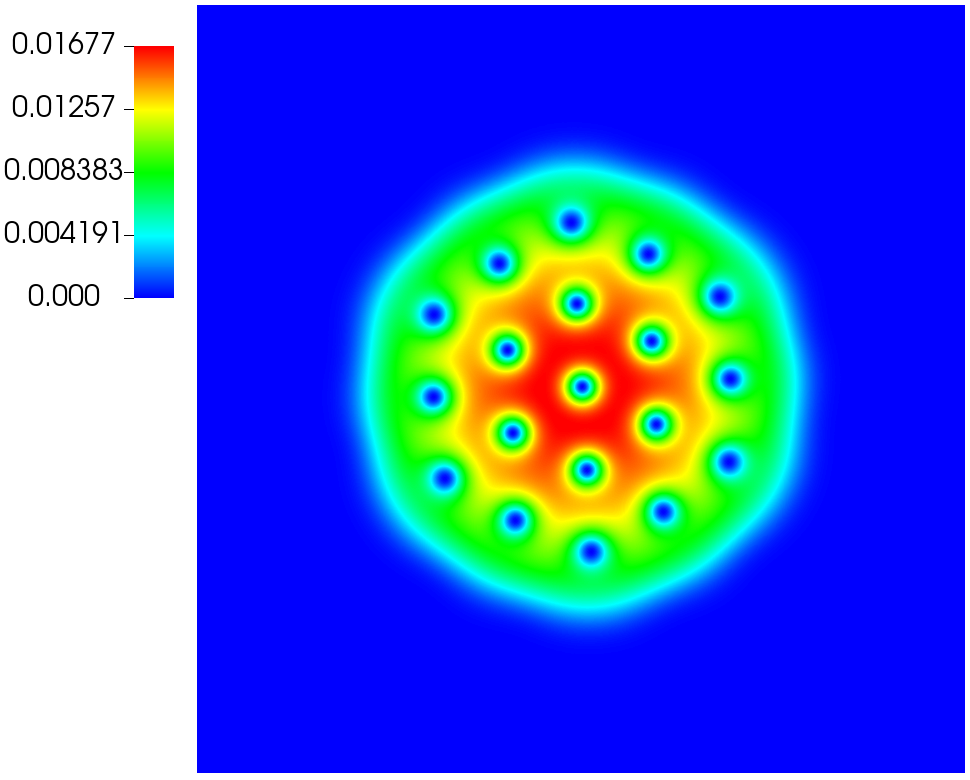} &
\includegraphics[width=0.45\textwidth]{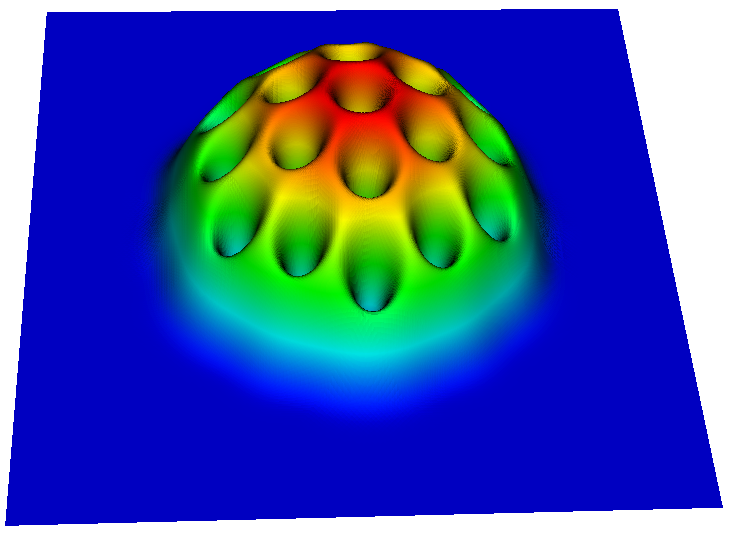} \\
\multicolumn{2}{c}{(a)} \\
& \\
\includegraphics[scale=0.4]{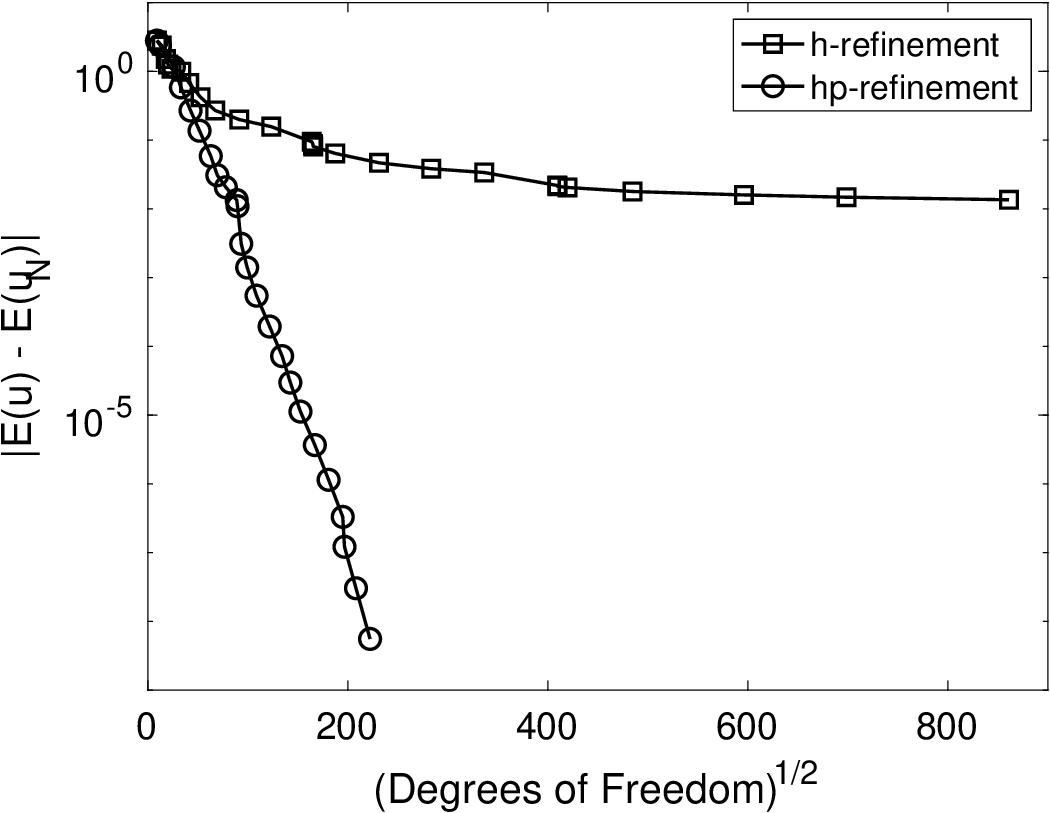} &
\includegraphics[scale=0.43]{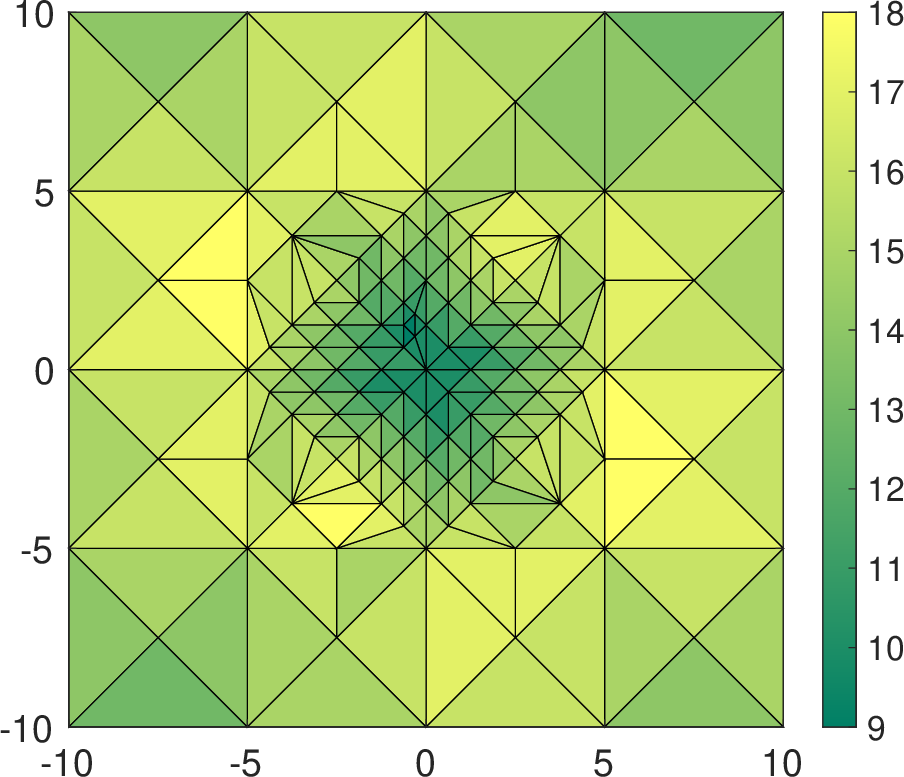} \\
(b) & (c)
\end{tabular}
\end{center}
\caption{GPE with angular momentum: $\omega = 0.8$, $\beta = 500$. (a) Computed ground-state density $|u_N|^2$; (b) Comparison of the error with respect to the square root of the
number of degrees of freedom; (c) Final $hp$-mesh.}
\end{figure}

\begin{figure}[t!]
\begin{center}
\begin{tabular}{cc}
\includegraphics[width=0.45\textwidth]{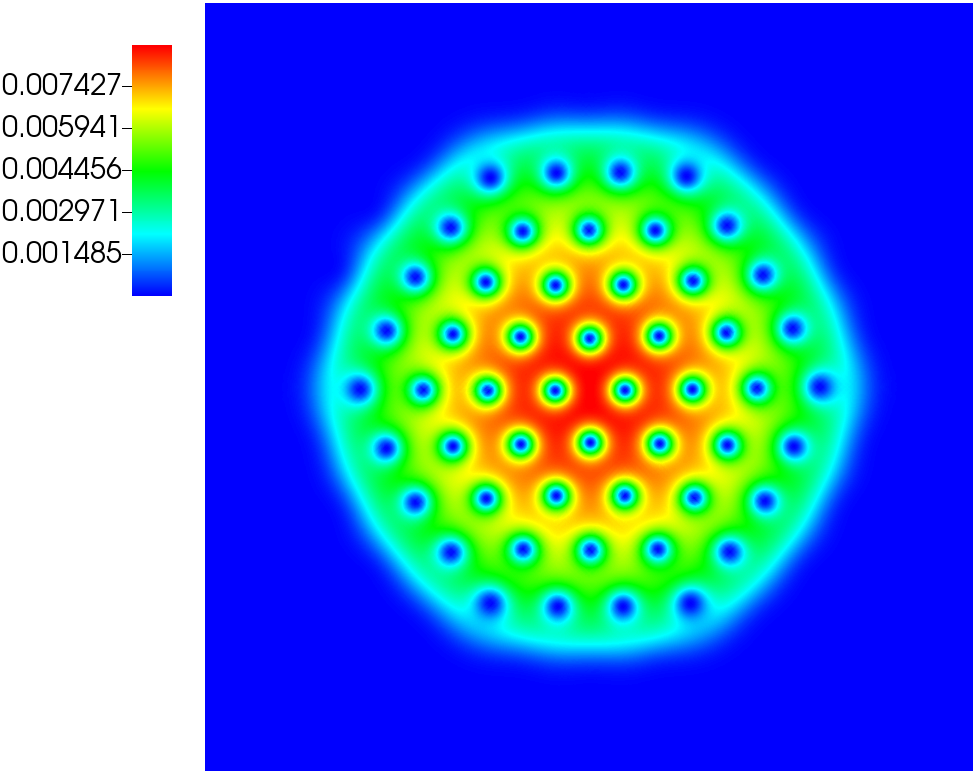} &
\includegraphics[width=0.5\textwidth]{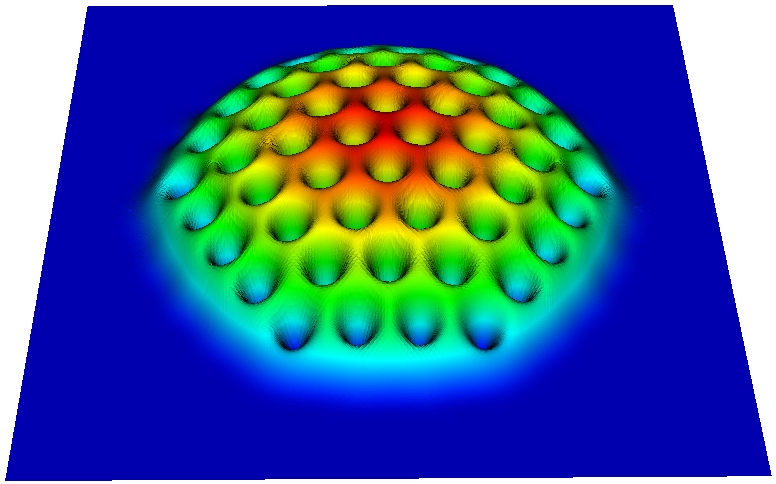} \\
\multicolumn{2}{c}{(a)} \\
& \\
\includegraphics[scale=0.4]{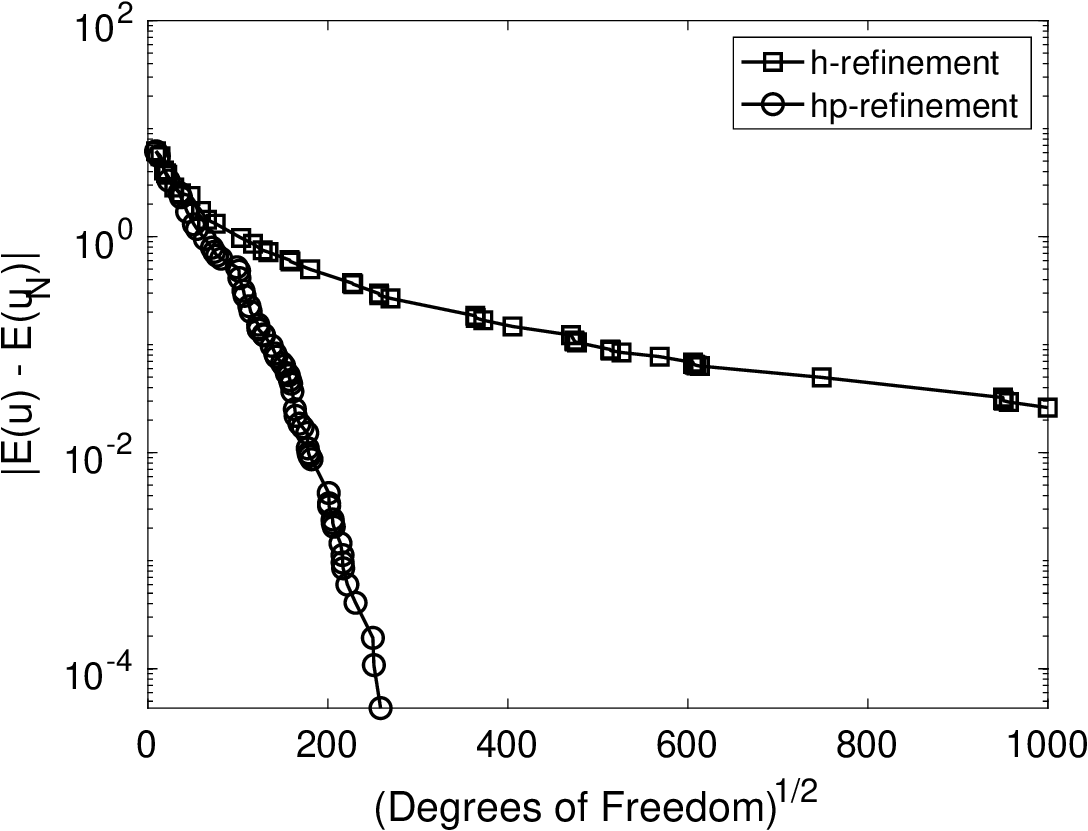} &
\includegraphics[scale=0.43]{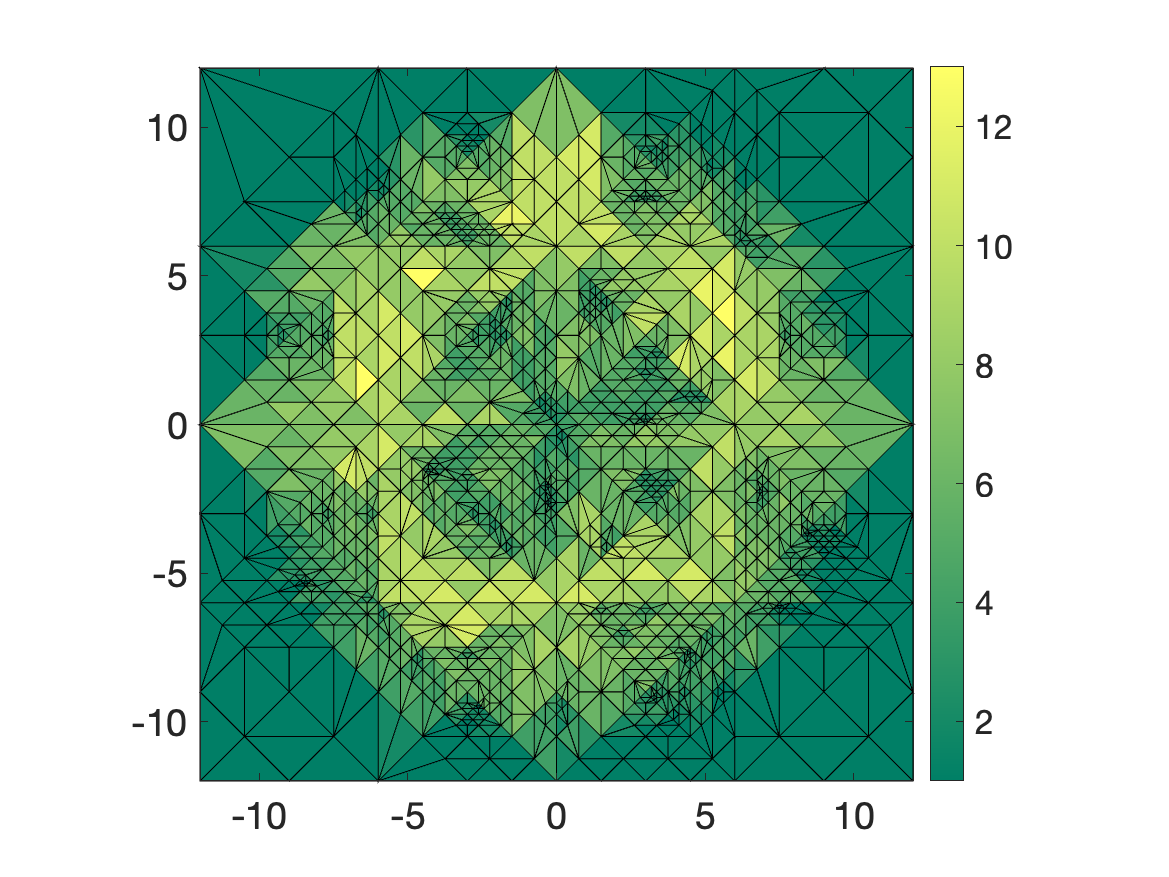} \\
(b) & (c)
\end{tabular}
\end{center}
\caption{GPE with angular momentum: $\omega = 0.9$, $\beta = 1000$. (a) Computed ground-state density $|u_N|^2$; (b) Comparison of the error with respect to the square root of the
number of degrees of freedom; (c) Final $hp$-mesh.}
\label{fig:omega_0.9_beta_1000}
\end{figure}

\section{Conclusions} \label{sec:conclusions}
In this paper, we have considered the stationary non-linear Gross--Pitaevskii eigenvalue problem in the \emph{presence of a rotating magnetic field}, which yields an intricate topology manifested by the presence of quantized vortices in the ground-state; the \emph{number and location of these vortices is unknown a priori}. In particular, we have extended the energy-based adaptive finite element strategy presented in ~\cite{HeidStammWihler:21} in two key directions: firstly, we have considered the case when a rotating magnetic field is present in the underlying energy functional, and secondly, we have generalised the adaptivity framework to allow for $hp$-refinements of the underlying discrete finite element approximation space. 
Towards these goals, we have also presented a result on the convergence of a discrete iteration scheme  (Theorem~\ref{thm:discreteconvergence}) which is based on a continuous projected gradient flow that respects the energy-based topology of the underlying problem at hand. The key feature of the proposed energy-adaptive $hp$-mesh refinement strategy is that it is based solely on potential energy-decay which can be computed locally and does not require sophisticated localisation schemes as is standard for residual-based a posteriori error estimators. 
The resulting method thereby intertwines the $hp$-mesh refinement iterative procedure with the iterative non-linear solver for the underlying problem within a unified framework in the sense that the numerical approximation to the non-linear problem is computed in tandem while the $hp$-mesh is adaptively refined.
The performance of this proposed strategy is demonstrated on a  diverse set of benchmark test problems that highlight the exponential convergence of the error in the computed ground-state energy with respect to the number of degrees of freedom in the resulting $hp$-finite element space.

\section*{Acknowledgments}
T.~P.~W. acknowledges the financial support of the Swiss National Science Foundation (SNF), Grant No.~$200021\underline{\phantom{-}}212868$.
B.~S. acknowledges funding by the Deutsche Forschungsgemeinschaft (DFG, German Research Foundation), Project No.~442047500, through the Collaborative Research Center “Sparsity and Singular Structures” (SFB 1481).
Finally, we thank Eric Canc\`{e}s for encouraging discussions and motivating us to extend the framework to the case with rotating magnetic field. 

\bibliographystyle{amsplain}
\bibliography{references,GPrefs}

\end{document}